\newcommand{\CC}{\mathbb{C}}
\newcommand{\NN}{\mathbb{N}}
\newcommand{\QQ}{\mathbb{Q}}
\newcommand{\RR}{\mathbb{R}}
\newcommand{\ZZ}{\mathbb{Z}}
\newcommand{\bx}{{\boldsymbol{x}}}
\newcommand{\blambda}{{\boldsymbol{\lambda}}}
\newcommand{\bone}{{\boldsymbol{1}}}
\newcommand{\cB}{{\mathcal B}}
\newcommand{\cF}{{\mathcal F}}
\newcommand{\cV}{{\mathcal V}}
\newcommand{\cX}{{\mathcal X}}
\DeclareMathOperator*{\argmax}{arg\,max}
\newcommand{\PP}{\operatorname{\mathbb{P}}}
\newcommand{\sign}{\operatorname{sign}}
\newcommand{\vare}{\varepsilon}
\newcommand{\comment}[1]{}
\renewcommand{\mid}{\,|\,}
\renewcommand{\leq}{\leqslant}
\renewcommand{\geq}{\geqslant}
\newcommand{\proofend}{\hfill\mbox{$\Box$}}
\numberwithin{equation}{section}
\theoremstyle{change} \theorembodyfont{\em}
\newtheorem{Lem}{Lemma.}[section]
\newtheorem{Thm}[Lem]{Theorem.}
\newtheorem{Cor}[Lem]{Corollary.}
\newtheorem{Def}[Lem]{Definition.}
\newtheorem{Rem}[Lem]{Remark.}
\newtheorem{Ex}[Lem]{Example.}
\def\OnlyOnArXiv#1#2{\ifthenelse{\equal{#1}{Y}}{#2}{}}
\long\def\Eq#1#2{\ifthenelse{\equal{#1}{*}}
  {\begin{equation*}\begin{aligned}#2\end{aligned}\end{equation*}}
  {\begin{equation}\begin{aligned}\label{#1}#2\end{aligned}\end{equation}}}
\newenvironment{proof}{\noindent{\bf Proof.}}{\proofend}
\begin{document}

\begin{center}
 {\bfseries\Large Determining classes for generalized $\psi$-estimators}

\vspace*{3mm}

{\sc\large
  M\'aty\'as $\text{Barczy}^{*,\diamond}$,
  Zsolt $\text{P\'ales}^{**}$ }

\end{center}

\vskip0.2cm

\noindent
 * HUN-REN–SZTE Analysis and Applications Research Group,
   Bolyai Institute, University of Szeged,
   Aradi v\'ertan\'uk tere 1, H--6720 Szeged, Hungary.

\noindent
 ** Institute of Mathematics, University of Debrecen,
    Pf.~400, H--4002 Debrecen, Hungary.

\noindent e-mail: barczy@math.u-szeged.hu (M. Barczy),
                  pales@science.unideb.hu  (Zs. P\'ales).

\noindent $\diamond$ Corresponding author.

\vskip0.2cm


{\renewcommand{\thefootnote}{}
\footnote{\textit{2020 Mathematics Subject Classifications\/}:
 {Primary: 62F10, Secondary: 26E60, 26D15.} }
\footnote{\textit{Key words and phrases\/}:
 generalized $\psi$-estimator, $Z$-estimator, determining class, comparative function, Schweitzer's inequality, Kantorovich's inequality.}
\vspace*{0.2cm}
\footnote{M\'aty\'as Barczy is supported by the Ministry of Innovation and Technology of Hungary from the National Research, Development and Innovation Fund, project no.\ TKP2021-NVA-09.
Zsolt P\'ales is supported by the K-134191 NKFIH Grant.}}

\vspace*{-10mm}

\begin{abstract}
We prove that the values of a generalized $\psi$-estimator (introduced by Barczy and P\'ales in 2025) on samples of arbitrary length but having only two different observations uniquely determine the values of the estimator on any sample of arbitrary length without any restriction on the number of different observations.
In other words, samples of arbitrary length but having only two different observations form a determining class for generalized $\psi$-estimators.
We also obtain a similar statement for the comparison of generalized $\psi$-estimators using comparative functions,
 and, as a corollary of this result, we derive the Schweitzer's inequality (also called Kantorovich's inequality).
\end{abstract}


\section{Introduction}
\label{section_intro}

Determining classes play important roles in many branches of mathematics, for example,
in real and complex analysis and in probability theory as well.
In this paper, we will provide an explicitly given (non-trivial) determining class (see Definition \ref{Def_det_class})
 for some generalized $\psi$-estimators, introduced by Barczy and P\'ales \cite{BarPal2}.
First, as a motivation, we will  recall some notable results for determining classes in the above mentioned fields.

Given a measurable space $(\Omega,\cF)$, a subset $\cV$ of $\cF$ is called a determining class for $(\Omega,\cF)$ if, for any pair of probability measures $\PP_1$ and $\PP_2$ on $(\Omega,\cF)$, whenever the equality $\PP_1(A) = \PP_2(A)$ holds for all $A\in\cV$, then it also holds for all $A\in\cF$.
For example, if $\Omega$ is a finite set and $\cF$ is the power set $2^\Omega$ of $\Omega$ (i.e., the collection of the subsets of $\Omega$), then the set $\cV$ consisting of all the singletons $\{\omega\}$, $\omega\in\Omega$, is a determining class for $(\Omega, 2^\Omega)$.
Turning back to the general setup, note that if $\cV$ is a subset of $\cF$ such that it generates $\cF$ (i.e., the generated $\sigma$-algebra by $\cV$ equals $\cF$), then $\cV$ is not necessarily a determining class for $(\Omega,\cF)$.
Further, it is known that if $S$ is a metric space and  $\cF$ is the Borel $\sigma$-algebra $\cB(S)$ corresponding to the metric of $S$, then the class of closed subsets of $S$ forms a determining class for $(S,\cB(S))$, see, e.g., Billingsley \cite[Theorem 1.1]{Bil}.

In real analysis, a Hamel basis $H$ is any basis of $\RR$
viewed as a vector space over the field of the rational  numbers.
It is known that for every function $g:H\to\RR$, there exists a unique additive function $f:\RR\to\RR$ such that $f=g$ on $H$, see, e.g., Kuczma \cite[Theorem 5.2.2]{Kuc2009}.
In other words, we could say that any Hamel basis of $\RR$ is a determining class
for the set of real-valued additive functions defined on $\RR$.

In complex analysis, the so-called Identity Theorem for analytic functions states that
given two analytic functions $f,g:D\to\CC$, defined on a domain $D$ (a nonempty, open and connected subset of the set complex numbers $\CC$), if $f=g$ on some subset $S$ of $D$, where $S$ has an accumulation point in $D$,
 then $f = g$ on $D$, see, e.g., Freitag and Busam \cite[Theorem III.3.2]{FreBus}.
As a consequence, an analytic function is completely determined
 by its values on a countable subset which contains a converging sequence together with its limit.
In other words, we could say that such countable subsets form a determining class for the analytic functions defined on a domain.
Cauchy's integral formulas could be also interpreted in terms of determining classes,
 since the essence of these formulas is that they compute the values of an analytic function
 in the interior of a non-degenerate open disk from its values on the boundary, see, e.g., Freitag and Busam \cite[Theorem II.3.2]{FreBus}.

Throughout this paper, we fix the following notations.
Let $\NN$, $\ZZ_+$, $\QQ$, $\RR$, $\RR_+$, and $\RR_{++}$ denote the sets of positive integers, non-negative integers, rational numbers, real numbers, non-negative real numbers, and positive real numbers, respectively.
A real interval will be called nondegenerate if it contains at least two distinct points.
Given a nondegenerate interval $\Theta\subseteq \RR$ and a function $f:\Theta\to \RR$, let us define
 \[
   \argmax_{t\in\Theta} f(t) := \big\{ t\in\Theta : f(s)\leq f(t)\;\; \text{for all $s\in\Theta$} \big\}.
 \]
Given a function $f:\RR^2\to\RR$, the partial derivative of $f$ with respect to its first and second variables are denoted by
 $\partial_1 f$ and $\partial_2 f$, respectively.
For each $n\in\NN$, let us also introduce the set $\Lambda_n:=\RR_+^n\setminus\{(0,\ldots,0)\}$.

Next, we motivate and recall the notions of $\psi$-estimators and generalized $\psi$-estimators that are the main objects of the present paper.
In statistics, $M$-estimators play a fundamental role, and a special subclass, the class of $\psi$-estimators (also called $Z$-estimators), is also in the heart of investigations.
Let $(X,\cX)$ be a measurable space, $\Theta$ be a Borel subset of $\RR$, and $\psi:X\times\Theta\to\RR$ be a function such that for all $t\in\Theta$, the function $X\ni x\mapsto \psi(x,t)$ is measurable with respect to the sigma-algebra $\cX$.
Let $(\xi_n)_{n\in\NN}$ be a sequence of independent and identically distributed (i.i.d.) random variables
 with values in $X$ such that the distribution of $\xi_1$ depends on an unknown parameter $\vartheta \in\Theta$.
For each $n\in\NN$, Huber \cite{Hub64, Hub67} among others introduced an important estimator of $\vartheta$ based on the observations $\xi_1,\ldots,\xi_n$ as a solution $\widehat\vartheta_{n,\psi}(\xi_1,\ldots,\xi_n)$ of the
 equation (with respect to the unknown parameter):
 \[
  \sum_{i=1}^n \psi(\xi_i,t)=0, \qquad t\in\Theta.
 \]
In the statistical literature, one calls $\widehat\vartheta_{n,\psi}(\xi_1,\ldots,\xi_n)$ a $\psi$-estimator of the unknown parameter $\vartheta\in\Theta$ based on the i.i.d.\ observations $\xi_1,\ldots,\xi_n$, while other authors call it a $Z$-estimator (the letter Z refers to ''zero'').
In fact, $\psi$-estimators are special $M$-estimators (where the letter $M$ refers to ''maximum likelihood-type'')
 that were also introduced by Huber \cite{Hub64, Hub67}.
For a detailed exposition of $M$-estimators and $\psi$-estimators, see, e.g., Kosorok \cite[Sections 2.2.5 and 13]{Kos} or van der Vaart \cite[Section 5]{Vaa}.
In our recent paper Barczy and P\'ales \cite{BarPal2}, we introduced the notion of weighted generalized $\psi$-estimators
 (recalled also in parts (ii) and (iv) of Definition \ref{Def_Tn}), and we studied their existence and uniqueness.

Throughout this paper, let $X$ be a nonempty set, $\Theta$ be a nondegenerate open interval of $\RR$.
Let $\Psi(X,\Theta)$ denote the class of real-valued functions $\psi:X\times\Theta\to\RR$ such that,
 for all $x\in X$, there exist $t_+,t_-\in\Theta$ such that $t_+<t_-$ and $\psi(x,t_+)>0>\psi(x,t_-)$.
Roughly speaking, a function $\psi\in\Psi(X,\Theta)$ satisfies the following property: for all $x\in X$,
 the function $t\ni\Theta\mapsto \psi(x,t)$ changes sign (from positive to negative)
 on the interval $\Theta$ at least once.

In practical applications, $X$ could be a subset of $\RR^d$ (corresponding to multidimensional data), or that of $L^2([0,1])$,
 the space of square-integrable real-valued measurable functions defined on $[0,1]$ (corresponding to functional data).

\begin{Def}\label{Def_sign_change}
Let $\Theta$ be a nondegenerate open interval of $\RR$. For a function $f:\Theta\to\RR$, consider the following three level sets
\[
  \Theta_{f>0}:=\{t\in \Theta: f(t)>0\},\qquad
  \Theta_{f=0}:=\{t\in \Theta: f(t)=0\},\qquad
  \Theta_{f<0}:=\{t\in \Theta: f(t)<0\}.
\]
We say that $\vartheta\in\Theta$ is a \emph{point of sign change (of decreasing type) for $f$} if
 \[
 f(t) > 0 \quad \text{for $t<\vartheta$,}
   \qquad \text{and} \qquad
    f(t)< 0 \quad  \text{for $t>\vartheta$.}
 \]
\end{Def}

Note that there can exist at most one element $\vartheta\in\Theta$ which is a point of sign change for $f$.
Further, if $f$ is continuous at a point $\vartheta$ of sign change, then $\vartheta$ is the unique zero of $f$.

\begin{Def}\label{Def_Tn}
We say that a function $\psi\in\Psi(X,\Theta)$
  \begin{enumerate}[(i)]
    \item \emph{possesses the property $[C]$ (briefly, $\psi$ is a $C$-function)} if
           it is continuous in its second variable, i.e., if, for all $x\in X$,
           the mapping $\Theta\ni t\mapsto \psi(x,t)$ is continuous.
    \item \emph{possesses the property $[T_n]$ (briefly, $\psi$ is a $T_n$-function)
           for some $n\in\NN$} if there exists a mapping $\vartheta_{n,\psi}:X^n\to\Theta$ such that,
           for all $\pmb{x}=(x_1,\dots,x_n)\in X^n$ and $t\in\Theta$,
           \begin{align*}
             \psi_{\pmb{x}}(t):=\sum_{i=1}^n \psi(x_i,t) \begin{cases}
                 > 0 & \text{if $t<\vartheta_{n,\psi}(\pmb{x})$,}\\
                 < 0 & \text{if $t>\vartheta_{n,\psi}(\pmb{x})$},
            \end{cases}
           \end{align*}
          that is, for all $\pmb{x}\in X^n$, the value $\vartheta_{n,\psi}(\pmb{x})$ is a point of sign change for the function $\psi_{\pmb{x}}$. If there is no confusion, instead of $\vartheta_{n,\psi}$ we simply write $\vartheta_n$.
          We may call $\vartheta_{n,\psi}(\pmb{x})$ as a generalized $\psi$-estimator for
         some unknown parameter in $\Theta$ based on the realization $\bx=(x_1,\ldots,x_n)\in X^n$. If, for each $n\in\NN$, $\psi$ is a $T_n$-function, then we say that \emph{$\psi$ possesses the property $[T]$ (briefly, $\psi$ is a $T$-function)}.
    \item \emph{possesses the property $[Z_n]$ (briefly, $\psi$ is a $Z_n$-function) for some $n\in\NN$} if it is a $T_n$-function and
    \[
   \psi_{\pmb{x}}(\vartheta_{n,\psi}(\pmb{x}))=\sum_{i=1}^n \psi(x_i,\vartheta_{n,\psi}(\pmb{x}))= 0
    \qquad \text{for all}\quad \pmb{x}=(x_1,\ldots,x_n)\in X^n.
    \]
    If, for each $n\in\NN$, $\psi$ is a $Z_n$-function, then we say that \emph{$\psi$ possesses the property $[Z]$ (briefly, $\psi$ is a $Z$-function)}.
    \item \emph{possesses the property $[T_n^{\pmb{\lambda}}]$ for some $n\in\NN$ and $\pmb{\lambda}=(\lambda_1,\ldots,\lambda_n)\in\Lambda_n$ (briefly, $\psi$ is a $T_n^{\pmb{\lambda}}$-function)} if there exists a mapping $\vartheta_{n,\psi}^{\pmb{\lambda}}:X^n\to\Theta$ such that, for all $\pmb{x}=(x_1,\dots,x_n)\in X^n$ and $t\in\Theta$,
          \begin{align*}
           \psi_{\pmb{x},\pmb{\lambda}}(t):= \sum_{i=1}^n \lambda_i\psi(x_i,t) \begin{cases}
                 > 0 & \text{if $t<\vartheta_{n,\psi}^{\pmb{\lambda}}(\pmb{x})$,}\\
                 < 0 & \text{if $t>\vartheta_{n,\psi}^{\pmb{\lambda}}(\pmb{x})$},
             \end{cases}
           \end{align*}
           that is, for all $\pmb{x}\in X^n$, the value $\vartheta_{n,\psi}^{\pmb{\lambda}}(\pmb{x})$ is
           a point of sign change for the function $\psi_{\pmb{x},\pmb{\lambda}}$.
           If there is no confusion, instead of $\vartheta_{n,\psi}^{\pmb{\lambda}}$ we simply write $\vartheta_n^{\pmb{\lambda}}$.
          We may call $\vartheta_{n,\psi}^{\pmb{\lambda}}(\pmb{x})$
          as a weighted generalized $\psi$-estimator for some unknown parameter in $\Theta$ based
          on the realization $\bx=(x_1,\ldots,x_n)\in X^n$ and weights $(\lambda_1,\ldots,\lambda_n)\in\Lambda_n$.
    \item \emph{possesses the property $[Z_n^{\pmb{\lambda}}]$ for some $n\in\NN$ and $\pmb{\lambda}=(\lambda_1,\ldots,\lambda_n)\in\Lambda_n$ (briefly, $\psi$ is a $Z_n^{\pmb{\lambda}}$-function)} if it is a $T_n^{\pmb{\lambda}}$-function and
    \[
        \psi_{\pmb{x,\blambda}}(\vartheta_{n,\psi}^\blambda(\pmb{x}))
           =\sum_{i=1}^n \lambda_i \psi(x_i,\vartheta_{n,\psi}^\blambda(\bx))= 0
    \qquad \text{for all}\quad \pmb{x}=(x_1,\ldots,x_n)\in X^n.
    \]
    \item \emph{possesses the property $[W_n]$ for some $n\in\NN$ (briefly, $\psi$ is a $W_n$-function)}
          if it is a $T_n^{\pmb{\lambda}}$-function for all $\pmb{\lambda}\in\Lambda_n$. If, for each $n\in\NN$, $\psi$ is a $W_n$-function, then we say that \emph{$\psi$ possesses the property $[W]$ (briefly, $\psi$ is a $W$-function)}.

   \end{enumerate}
\end{Def}

It can be seen that if $\psi$ is continuous in its second variable, and, for some $n\in\NN$, it is a $T_n$-function, then it also a $Z_n$-function.
Further, if $\psi\in\Psi(X,\Theta)$ is a $T_n$-function for some $n\in\NN$, then $\vartheta_{n,\psi}$ is symmetric in the sense that
$\vartheta_{n,\psi}(x_1,\ldots,x_n) = \vartheta_{n,\psi}(x_{\pi(1)},\ldots,x_{\pi(n)})$ holds for all $x_1,\ldots,x_n\in X$ and all permutations $(\pi(1),\ldots,\pi(n))$ of $(1,\ldots,n)$.

Assume that $\psi$ possesses property $[W]$.
Then, by definition, it also possesses the property $[T]$, and, for all $n\in\NN$, $\vartheta_{n,\psi}=\vartheta_{n,\psi}^{(1,\dots,1)}$ holds on $X^n$.
More generally, for all $k,n_1,\dots,n_k\in\NN$ and $x_1,\dots,x_k\in X$, we have
\Eq{*}{
  \vartheta_{n_1+\dots+n_k,\psi}(\underbrace{x_1,\dots,x_1}_{n_1},\dots,\underbrace{x_k,\dots,x_k}_{n_k})
  =\vartheta_{k,\psi}^{(n_1,\dots,n_k)}(x_1,\dots,x_k).
}

Given $q\in\NN$ and properties $[P_1], \ldots, [P_q]$ which can be any of the properties introduced in Definition~\ref{Def_Tn}, the subclass of $\Psi(X,\Theta)$ consisting of elements possessing the properties $[P_1],\ldots,[P_q]$, will be denoted by $\Psi[P_1,\ldots,P_q](X,\Theta)$.

In our paper Barczy and P\'ales \cite[Section 4]{BarPal2}, we presented several examples for statistical estimators,
 which are special generalized $\psi$-estimators.
The first group of examples includes several well-known descriptive statistics that
 can be considered as special generalized $\psi$-estimators, namely, the empirical median, the empirical quantiles
 and the empirical expectiles.
The second group of examples contains the class of $\psi$-estimators recently used by Mathieu \cite{Mat},
 and some $\psi$-estimators that are important in robust statistics.
The third group of examples includes maximum likelihood estimators of some parameter of absolutely continuous distributions
 (for example, we considered a mixture Gaussian distribution).
Further, in Barczy and P\'ales \cite[Definition 4, Proposition 6 and the paragraph after them]{BarPal2},
 we also introduced Bajraktarevi\'{c}-type (in particular, quasi-arithmetic-type) $\psi$-estimators as special generalized $\psi$-estimators.
These special and well-known cases of generalized $\psi$-estimators may indicate that it is worth studying general properties of this class of estimators.
In the present paper, we consider the problem of finding an explicitly given determining class for generalized $\psi$-estimators.

Next, we recall Theorem 2.1 and Corollary 3.8 from our paper Barczy and P\'ales \cite{BarPal4},
 which are going to play an essential role in the proof of our main results, Theorems \ref{Thm_2_to_n} and \ref{Thm_2_to_n+w}.
 For their formulations, we will adopt the following convention: if $k,n_1,\dots,n_k\in\NN$ and
 $\bx_1=(x_{1,1},\dots,x_{1,n_1})\in X^{n_1}$, \dots, $\bx_k=(x_{k,1},\dots,x_{k,n_k})\in X^{n_k}$,
 then their \emph{concatenation} $(\bx_1,\dots,\bx_k)\in X^{n_1}\times\cdots\times X^{n_k}$ is identified by
\[
 (x_{1,1},\dots,x_{1,n_1}, \dots,
 x_{k,1},\dots,x_{k,n_k})\in X^{n_1+\dots+n_k}.
\]
Furthermore, for each $n\in\NN$ and $x\in X$, let $n\odot x$ denote the $n$-tuple $(x,\ldots,x)\in X^n$.

\begin{Thm}\label{Thm_psi_becsles_mean_prop}{\rm (\cite[Theorem 2.1]{BarPal4})}
Let $\psi\in\Psi[T](X,\Theta)$.
Then, for each $k,n_1,\dots,n_k\in\NN$ and $\bx_1\in X^{n_1}$, \dots, $\bx_k\in X^{n_k}$, we have
\begin{align}\label{help_mean_prop_csop}
\begin{split}
 \min(\vartheta_{n_1}(\bx_1),\dots,\vartheta_{n_k}(\bx_k))
  \leq \vartheta_{n_1+\dots+n_k}(\bx_1,\dots,\bx_k)\leq \max(\vartheta_{n_1}(\bx_1),\dots,\vartheta_{n_k}(\bx_k)).
\end{split}
\end{align}
Furthermore, if $\psi\in\Psi[Z](X,\Theta)$ and not all of the values $\vartheta_{n_1}(\bx_1),\dots,\vartheta_{n_k}(\bx_k)$ are equal, then both inequalities in \eqref{help_mean_prop_csop} are strict.
\end{Thm}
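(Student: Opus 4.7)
The plan is to pass the information about each individual sign change $\vartheta_{n_j}(\bx_j)$ into the combined sum $\psi_{(\bx_1,\dots,\bx_k)}(t) = \sum_{j=1}^k \psi_{\bx_j}(t)$ and then read off where this combined sum can cross zero via the $T_{n_1+\dots+n_k}$-property applied to the concatenation. Throughout, set $m := \min_{j}\vartheta_{n_j}(\bx_j)$ and $M := \max_{j}\vartheta_{n_j}(\bx_j)$.

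For the weak inequalities, I would argue that for any $t<m$ we have $t < \vartheta_{n_j}(\bx_j)$ for every $j\in\{1,\dots,k\}$, so the $T_{n_j}$-property yields $\psi_{\bx_j}(t)>0$ for each $j$, and summation gives $\psi_{(\bx_1,\dots,\bx_k)}(t)>0$. Symmetrically, for every $t>M$ one obtains $\psi_{(\bx_1,\dots,\bx_k)}(t)<0$. Since $\psi\in\Psi[T](X,\Theta)$, the concatenated tuple has a (unique) point of sign change $\vartheta_{n_1+\dots+n_k}(\bx_1,\dots,\bx_k)$; the two inequalities just derived forbid it to lie in $(-\infty,m)\cup(M,+\infty)$, so it belongs to $[m,M]$, which is exactly \eqref{help_mean_prop_csop}.

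For the strict inequalities under $\psi\in\Psi[Z](X,\Theta)$, suppose the values $\vartheta_{n_j}(\bx_j)$ are not all equal, so in particular $m<M$. Let $J_m:=\{j:\vartheta_{n_j}(\bx_j)=m\}$; by hypothesis $J_m\neq\{1,\dots,k\}$. For $j\in J_m$ the $Z_{n_j}$-property gives $\psi_{\bx_j}(m)=0$, while for $j\notin J_m$ we have $m<\vartheta_{n_j}(\bx_j)$, so the $T_{n_j}$-property gives $\psi_{\bx_j}(m)>0$. Summing,
\[
 \psi_{(\bx_1,\dots,\bx_k)}(m) \;=\; \sum_{j\in J_m}0 \;+\; \sum_{j\notin J_m}\psi_{\bx_j}(m) \;>\;0.
\]
Invoking the $T_{n_1+\dots+n_k}$-property once more, any $t$ at which the combined sum is strictly positive must satisfy $t<\vartheta_{n_1+\dots+n_k}(\bx_1,\dots,\bx_k)$; choosing $t=m$ yields $m<\vartheta_{n_1+\dots+n_k}(\bx_1,\dots,\bx_k)$. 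A symmetric computation at $t=M$, using $J_M:=\{j:\vartheta_{n_j}(\bx_j)=M\}\neq\{1,\dots,k\}$, delivers the strict upper bound.

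The only point that requires genuine care is the evaluation of $\psi_{\bx_j}$ at its own point of sign change, which the $T$-property alone leaves uncontrolled; this is exactly where the $Z$-property is needed, since it turns the ambiguous value into a clean zero and thereby converts the chain of ``$\geq 0$''s at $t=m$ into a strict ``$>0$''. I expect no further obstacles beyond keeping track of the indices in $J_m$ and $J_M$.
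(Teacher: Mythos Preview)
The paper does not prove this theorem; it is quoted from \cite{BarPal4} (Theorem~2.1 there) and used as a black box, so there is no in-paper proof to compare against. Your argument is the natural direct one and is essentially correct, with one small slip in the strict-inequality step.

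When you write ``Invoking the $T_{n_1+\dots+n_k}$-property once more, any $t$ at which the combined sum is strictly positive must satisfy $t<\vartheta_{n_1+\dots+n_k}(\bx_1,\dots,\bx_k)$'', the $T$-property alone only gives $t\leq\vartheta_{n_1+\dots+n_k}(\bx_1,\dots,\bx_k)$, since the value of $\psi_{(\bx_1,\dots,\bx_k)}$ at the sign-change point itself is unconstrained by $[T]$. To upgrade this to a strict inequality you must also invoke the $Z_{n_1+\dots+n_k}$-property for the \emph{concatenated} tuple, which forces $\psi_{(\bx_1,\dots,\bx_k)}\big(\vartheta_{n_1+\dots+n_k}(\bx_1,\dots,\bx_k)\big)=0$ and hence rules out $m=\vartheta_{n_1+\dots+n_k}(\bx_1,\dots,\bx_k)$. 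This is available since the hypothesis here is $\psi\in\Psi[Z](X,\Theta)$. Your closing paragraph correctly flags that the $Z$-property is what controls values at sign-change points; you simply need it once more for the full concatenation, not only for the sub-tuples $\bx_j$.
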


The property \eqref{help_mean_prop_csop} is a kind of mean-type property for generalized $\psi$-estimators.

\begin{Lem}\label{Lem_infinit}{\rm (\cite[Corollary 3.8]{BarPal4})}
Let $\psi\in\Psi[T,W_2](X,\Theta)$. Then the generalized $\psi$-estimator
$\vartheta_\psi:\bigcup_{n=1}^\infty X^n\to\Theta$ defined by
 \[
    \vartheta_\psi(\bx):= \vartheta_{n,\psi}(\bx),
     \qquad n\in\NN,\quad \bx\in X^n,
 \]
possesses the property of sensitivity, that is, for all $x,y\in X$, $u,v\in\Theta$
 with $\vartheta_{1,\psi}(x)<u<v<\vartheta_{1,\psi}(y)$, there exist $k,m\in\NN$ such that
 \[
  u < \vartheta_{k+m,\psi}(k\odot x, m\odot y) < v.
 \]
\end{Lem}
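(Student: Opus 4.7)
The plan is to pass to the one-parameter family of weighted estimators and invoke $[W_2]$. Write $a:=\vartheta_{1,\psi}(x)$ and $b:=\vartheta_{1,\psi}(y)$, so $a<u<v<b$. For positive integers $k,m$, the sum $t\mapsto k\psi(x,t)+m\psi(y,t)$ defining $\vartheta_{k+m,\psi}(k\odot x,m\odot y)$ coincides, up to the positive factor $k$, with the function $f_r(t):=\psi(x,t)+r\psi(y,t)$ at $r=m/k$. So if I set $\eta(r):=\vartheta_{2,\psi}^{(1,r)}(x,y)$ for $r\in(0,\infty)$ (which is defined thanks to $[W_2]$), then $\vartheta_{k+m,\psi}(k\odot x,m\odot y)=\eta(m/k)$, and the task reduces to producing a positive rational $r=m/k$ with $\eta(r)\in(u,v)$.

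I would then record three properties of $\eta$. First, $\eta(r)\in[a,b]$, because for $t<a$ one has $\psi(x,t),\psi(y,t)>0$ (so $f_r(t)>0$), while for $t>b$ both are negative. Second, $\eta$ is nondecreasing: if $t<\eta(r_1)\leq b$, then $f_{r_1}(t)>0$ and $\psi(y,t)>0$, so for any $r_2>r_1$ one obtains $f_{r_2}(t)=f_{r_1}(t)+(r_2-r_1)\psi(y,t)>0$, forcing $\eta(r_2)\geq\eta(r_1)$. Third, and crucially, $\eta$ admits an explicit right inverse on $(a,b)$: for every $t\in(a,b)$ we have $\psi(x,t)<0<\psi(y,t)$, so $r(t):=-\psi(x,t)/\psi(y,t)$ is a well-defined positive number and satisfies $f_{r(t)}(t)=0$. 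Since $f_{r(t)}$ is strictly positive below and strictly negative above its sign-change point $\eta(r(t))$, the vanishing at $t$ forces $\eta(r(t))=t$.

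From here the conclusion is quick. The identity $\eta\circ r=\mathrm{id}_{(a,b)}$ shows that $r$ is injective on $(u,v)$, so $r((u,v))\subset\eta^{-1}((u,v))$ consists of uncountably many distinct points. The nondecreasing property of $\eta$ makes $\eta^{-1}((u,v))$ a convex subset of $(0,\infty)$, i.e.\ a genuine interval; since it contains uncountably many points, it is nondegenerate and therefore must contain a positive rational $m/k$. This choice of $(k,m)$ gives $\vartheta_{k+m,\psi}(k\odot x,m\odot y)=\eta(m/k)\in(u,v)$, as required.

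The delicate point, and the reason the strategy succeeds without any continuity assumption on $\psi$ in its second variable, is showing that $\eta^{-1}((u,v))$ is nondegenerate: a priori the monotone function $\eta$ could jump over the target interval, or hit it only at a single (possibly irrational) point. The explicit inversion $r(t)=-\psi(x,t)/\psi(y,t)$ bypasses this obstacle by producing a whole continuum of $r$-values mapped into $(u,v)$, which forces the preimage to carry positive length and hence to contain rationals.
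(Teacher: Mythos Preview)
Your proof is correct. Note, however, that the paper itself does not prove this lemma: it is merely quoted (as Corollary~3.8 of \cite{BarPal4}) for later use, so there is no in-paper argument to compare against directly.

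That said, your approach is clean and self-contained. Passing to the one-parameter family $\eta(r)=\vartheta_{2,\psi}^{(1,r)}(x,y)$ via $[W_2]$, establishing that $\eta$ is nondecreasing, and then exhibiting the explicit right inverse $r(t)=-\psi(x,t)/\psi(y,t)$ on $(a,b)$ is exactly the right idea. The crucial step---arguing that $\eta^{-1}((u,v))$ is a convex set containing the injective image $r((u,v))$, hence a nondegenerate interval, hence one containing a positive rational---neatly sidesteps any need for continuity of $\psi$ in its second variable. Each of your three claimed properties of $\eta$ checks out: the range bound $\eta(r)\in[a,b]$ follows from the sign-change characterization of $a$ and $b$; monotonicity follows because $\psi(y,t)>0$ for $t<b$; and $\eta(r(t))=t$ is forced since $f_{r(t)}$ vanishes at $t$ while being strictly positive below and strictly negative above its sign-change point.
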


The paper is organized as follows.
In Section \ref{section_formula}, we derive an explicitly given determining class for generalized $\psi$-estimators.
Namely, given a function $\psi\in\Psi(X,\Theta)$ having the properties $[T]$ and $[W_2]$, we prove that the values of the corresponding generalized $\psi$-estimator on samples of arbitrary length but having only two different observations uniquely determine the values of the estimator on any sample of arbitrary length without any restriction on the number of different observations (see Theorems \ref{Thm_2_to_n} and \ref{Thm_2_to_n+w}) with a function $\psi\in\Psi(X,\Theta)$ having the properties $[T]$ and $[W_2]$.
In other words, samples of arbitrary length but having only two different observations form a determining class for generalized $\psi$-estimators.
We also provide three applications of Theorem \ref{Thm_2_to_n}, the first and second are related to the maximum likelihood estimator of some parameter of certain absolutely continuous random variables, and the third is about expectiles.

In Example \ref{Ex3}, we can approximate the unique solution of the equation
\Eq{p-odd}{\sum_{i=1}^n(x_i-t)^p = 0,\qquad t\in\RR,}
where $n\in\NN$, $x_1,\ldots,x_n\in\RR$, and $p\geq 1$ is an odd integer, using the explicit solutions of the equations having the form $\lambda_1(x_1-t)^p + \lambda_2(x_2-t)^p=0$, $t\in\RR$, where $x_1,x_2\in\RR$ and $(\lambda_1,\lambda_2)\in\Lambda_2$.
Note that the equation \eqref{p-odd} is nothing else but the likelihood equation for $t$ when we estimate the parameter $t$ of a generalized normal distribution having location parameter $t$, scale parameter $(p+1)^{\frac{1}{p+1}}$ and shape parameter $p+1$ using the maximum likelihood method supposing that $p$ is known (for more details, see Example \ref{Ex3}).

In Example \ref{Ex6}, we consider a similar question for the equation
 \begin{align}\label{help_28}
   \frac{\alpha}{\alpha+1} = \frac{\mu}{n}\sum_{i=1}^n \frac{1}{x_i+\mu},\qquad \mu\in\RR_{++},
 \end{align}
 where $n\in\NN$, $x_1,\ldots,x_n\in\RR_{++}$, and $\alpha\in\RR_{++}$ are given.
Note that the equation \eqref{help_28} is nothing else but the likelihood equation for $\mu$ when we estimate the parameter $\mu$ of a Lomax (Pareto type II) distribution with parameters $\alpha>0$ and $\mu>0$ using the maximum likelihood method
supposing that $\alpha$ is known (for more details, see Example \ref{Ex6}).

In Example \ref{Ex_Expectile}, we demonstrate how one can use Theorem \ref{Thm_2_to_n} in order to approximate the empirical expectile of any sample by the empirical expectiles of subsamples consisting only of two observations, that are nothing else but some weighted means.

In Section \ref{section_comparative},
 we investigate the problem of comparison of generalized $\psi$-estimators
 using so-called comparative functions (continuous functions on $\Theta\times\Theta$
 that are increasing and decreasing in their first and second variable, respectively,
 see Definition \ref{Def_comparative_function}).
It turns out that two generalized $\psi$-estimators are comparable using a given comparative function
 for any sample of arbitrary length (without any restriction on the number of different observations)
 if and only if they are comparable for any sample of arbitrary length but having only two different observations
 (for a precise statement, see Theorem \ref{Thm_comparative_function}).
As an application of this result, in Corollary \ref{Cor_comparative_function}, we derive the Schweitzer’s inequality (also called Kantorovich’s inequality), see, e.g., formula (14) on page 202 in Bullen et al.\ \cite{Bul03}.

\section{Determining classes for generalized $\psi$-estimators}
\label{section_formula}

First, we recall and introduce some notations that are needed to formulate our forthcoming main results.
Recall that, for each $k\in\NN$ and $x\in X$, the notation $k\odot x$ denotes the $k$-tuple $(x,\ldots,x)\in X^k$, and let $0\odot x$ be the empty tuple (a neutral element for the operation concatenation introduced earlier).
For a finite set $F$, the cardinality of $F$ will be denoted by $\# F$.
Furthermore, for $k,n\in\NN$ with $k<n$ and for a matrix $\mathsf{m}=(\mathsf{m}_{i,j})_{i,j=k}^n \in\ZZ_+^{(n-k+1)\times (n-k+1)}$,
 the set $\Delta(\mathsf{m})\subseteq\{k,\dots,n\}^2$ is defined by
 \begin{align}\label{help_Delta_m}
 \Delta(\mathsf{m}):=\big\{(i,j)\colon k\leq i<j
 \leq n,\ \mathsf{m}_{i,j}+\mathsf{m}_{j,i}\geq 1\big\}.
 \end{align}

\begin{Def}\label{Def_det_class}
Let $q\in\NN$ and $[P_1], \ldots, [P_q]$ be some of the properties introduced in Definition~\ref{Def_Tn}.
A subset $S\subseteq\bigcup_{n=1}^\infty X^n$ is said to be a determining class for $\Psi[P_1,\ldots,P_q](X,\Theta)$ if, for any pair of functions $\psi,\varphi\in\Psi[P_1,\ldots,P_q](X,\Theta)$, whenever the equality
\[
 \vartheta_{n,\psi}(\bx)=\vartheta_{n,\varphi}(\bx)
\]
holds for all $n\in\NN$ and $\bx\in S\cap X^n$, then it also holds for all $n\in\NN$ and $\bx\in X^n$.
\end{Def}

As the main result of this paper, in the case of $q=2$, $[P_1]=[T]$ and $[P_2]=[W_2]$, we construct a nontrivial determining class for $\Psi[T,W_2](X,\Theta)$, and, in addition, we also provide an explicit formula for the generalized $\psi$-estimator in terms of its restriction to the determining class.
We note that, provided that $\psi\in\Psi[Z_1](X,\Theta)$, the function $\psi$ belongs to $\Psi[W_2](X,\Theta)$ if and only if it belongs to $\Psi[W](X,\Theta)$
 (i.e., $\psi\in\Psi[W_n](X,\Theta)$ for each $n\in\NN$), see Barczy and P\'ales \cite[Corollary 1]{BarPal2}.
Consequently, $\Psi[Z_1,W_2](X,\Theta)\subseteq\Psi[T,W_2](X,\Theta)$.

\begin{Thm}\label{Thm_2_to_n}
Let $\psi\in\Psi[T,W_2](X,\Theta)$.
Then, for all $n\geq 2$, $n\in\NN$, and $x_1,\ldots,x_n\in X$, we have
\begin{align}\label{help_psi_2sample_formula}
 \begin{split}
   \vartheta_{n}(x_1,\dots,x_n)
   &=\sup_{\mathsf{m}\in \mathsf{M}_n}\min_{(i,j)\in\Delta(\mathsf{m})}
    \vartheta_{\mathsf{m}_{i,j}+\mathsf{m}_{j,i}}(\mathsf{m}_{i,j}\odot x_i, \mathsf{m}_{j,i}\odot x_j)\\
   &=\inf_{\mathsf{m}\in \mathsf{M}_n}\max_{(i,j)\in\Delta(\mathsf{m})}
    \vartheta_{\mathsf{m}_{i,j}+\mathsf{m}_{j,i}}(\mathsf{m}_{i,j}\odot x_i, \mathsf{m}_{j,i}\odot x_j),
 \end{split}
 \end{align}
 where the set $\mathsf{M}_n$ of nonnegative integer-valued $n\times n$ matrices is defined by
 \begin{align*}
  \mathsf{M}_n:=\bigcup_{k=1}^\infty\bigg\{ \mathsf{m}=(\mathsf{m}_{i,j})_{i,j=1}^n \in\ZZ_+^{n\times n}\colon\!\! \sum_{j\in\{1,\dots,n\}\setminus\{i\}}\!\! \mathsf{m}_{i,j} =k,\; i\in\{1,\ldots,n\}\
  \hbox{ and }\ \#\Delta(\mathsf{m})\leq n-1 \bigg\}.
 \end{align*}
\end{Thm}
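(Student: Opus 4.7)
I would start with the simpler direction: for every $\mathsf{m} \in \mathsf{M}_n$,
\[
\min_{(i,j) \in \Delta(\mathsf{m})} \vartheta^{(i,j)} \;\leq\; \vartheta_n(\bx) \;\leq\; \max_{(i,j) \in \Delta(\mathsf{m})} \vartheta^{(i,j)},
\]
where $\vartheta^{(i,j)} := \vartheta_{\mathsf{m}_{i,j}+\mathsf{m}_{j,i}}(\mathsf{m}_{i,j}\odot x_i,\, \mathsf{m}_{j,i}\odot x_j)$. The key observation is the identity
\[
k\,\psi_{\bx}(t) \;=\; \sum_{i\neq j} \mathsf{m}_{i,j}\,\psi(x_i,t) \;=\; \sum_{(i,j)\in\Delta(\mathsf{m})} \bigl[\mathsf{m}_{i,j}\,\psi(x_i,t) + \mathsf{m}_{j,i}\,\psi(x_j,t)\bigr],
\]
which decomposes $k\psi_{\bx}$ (whose sign-change point is still $\vartheta_n(\bx)$, since scaling by $k>0$ preserves sign changes) as the $\psi$-sum of the concatenated sample $(\mathsf{m}_{i,j}\odot x_i, \mathsf{m}_{j,i}\odot x_j)_{(i,j)\in\Delta(\mathsf{m})}$. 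Theorem~\ref{Thm_psi_becsles_mean_prop} then sandwiches $\vartheta_n(\bx)$ between the min and max of the per-edge sign changes, and passing to $\sup$ and $\inf$ over $\mathsf{m}$ yields the one-sided inequalities.

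\textbf{Hard half.} For the reverse, I would prove: for every $v<\vartheta_n(\bx)$ there exists $\mathsf{m}_v\in\mathsf{M}_n$ such that $\vartheta^{(i,j)}>v$ on $\Delta(\mathsf{m}_v)$ (and, symmetrically, for every $w>\vartheta_n(\bx)$ there exists $\mathsf{m}_w$ with all $\vartheta^{(i,j)}<w$); letting $v\nearrow\vartheta_n(\bx)$ and $w\searrow\vartheta_n(\bx)$ then closes the gap. I would split $\{1,\ldots,n\}$ into $S:=\{i:\vartheta_1(x_i)\leq v\}$ and $T:=\{i:\vartheta_1(x_i)>v\}$; Theorem~\ref{Thm_psi_becsles_mean_prop} forces $T\neq\emptyset$. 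Next I would select a bipartite spanning tree between $S$ and $T$ (a double star with one center in each part is always available and has $n-1$ edges, hence belongs to $\mathsf{M}_n$), observing that within-$T$ edges automatically satisfy $\vartheta^{(i,j)}>v$ by Theorem~\ref{Thm_psi_becsles_mean_prop}, while for each cross-edge $\{i,j\}$, Lemma~\ref{Lem_infinit} (sensitivity) yields weights $a_{ij},b_{ij}\in\NN$ that force $\vartheta^{(i,j)}>v$, with the ratio $b_{ij}/a_{ij}$ as close to its critical value as desired. Multiplying all edge weights by common factors preserves each $\vartheta^{(i,j)}$, so one has enough freedom to rescale the tree's weights into a globally balanced integer matrix with common row sum $k$.

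\textbf{Main obstacle.} The hard part is this final balancing: the sensitivity lemma only yields per-edge guarantees, whereas $\mathsf{M}_n$ imposes the global equation ``every row has the same off-diagonal sum'', and integrality must be maintained throughout. The forest bound $\#\Delta(\mathsf{m})\leq n-1$ is both essential (it leaves just enough linear freedom to satisfy the row-sum equations) and restrictive (it forbids parallel paths that would trivialize load-spreading), so the choice of tree topology genuinely depends on how the $\vartheta_1(x_i)$'s are distributed around $v$. A naive star centered at a single $T$-vertex actually fails when $|S|$ is large and $v$ is near $\vartheta_n(\bx)$, because the cross-edges' collective demand on the center's weight budget exceeds $k$; this is the motivation for the double-star construction (or, in full generality, a bipartite caterpillar chosen adaptively). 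Once $\sup=\vartheta_n(\bx)$ is established in this way, the symmetric argument yields $\inf=\vartheta_n(\bx)$, giving both equalities in~\eqref{help_psi_2sample_formula} simultaneously.
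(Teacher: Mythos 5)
Your ``easy half'' is correct and is essentially the paper's Step~1: the row-sum condition gives $k\,\psi_{\bx}=\sum_{(i,j)\in\Delta(\mathsf{m})}\bigl[\mathsf{m}_{i,j}\psi(x_i,\cdot)+\mathsf{m}_{j,i}\psi(x_j,\cdot)\bigr]$, and Theorem~\ref{Thm_psi_becsles_mean_prop} applied to this concatenation yields both one-sided bounds.

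The ``hard half'', however, has a genuine gap, and it sits exactly where you point: converting per-edge weight choices into a matrix of $\mathsf{M}_n$. Once you fix a tree with no $S$--$S$ edges, each leaf $s\in S$ forces its (rescaled) own weight to equal the common row sum $k$, so the edge $\{s,t(s)\}$ contributes at least $k\,\rho_s$ to the budget of its $T$-endpoint, where $\rho_s$ is the smallest ratio $b/a$ for which $\vartheta_{a+b}(a\odot x_s,b\odot x_{t(s)})>v$. Feasibility therefore requires a bin-packing condition $\sum_{s:\,t(s)=t}\rho_s<1$ at every $T$-vertex, and nothing in Lemma~\ref{Lem_infinit} controls the sizes of these demands: sensitivity tells you that \emph{some} pair $(a,b)$ lands in a prescribed window, not that the ratio $b/a$ needed to clear $v$ is small. (For $\psi(x,t)=x-t$ the demands happen to sum to less than the budget precisely when $v<\vartheta_n(\bx)$, by linearity, but that computation does not survive for general $\psi\in\Psi[T,W_2]$.) Your own text concedes that the single star fails, proposes a double star, and then retreats to ``a bipartite caterpillar chosen adaptively''; but the double star has the same budget problem at whichever center carries several $S$-leaves, and the adaptive construction is never specified or shown to terminate in a feasible matrix. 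As written, the reverse inequality is not proved. A smaller slip: a bipartite double star between $S$ and $T$ has no within-$T$ edges, so that remark is vacuous for the topology you name (though within-$T$ edges are indeed harmless and within-$S$ edges are genuinely forbidden, since Theorem~\ref{Thm_psi_becsles_mean_prop} caps their pair-estimator at $v$).

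The paper avoids this global balancing altogether. It first proves the weighted generalization (Theorem~\ref{Thm_2_to_n+w}, for samples $k_1\odot x_1,\dots,k_n\odot x_n$) and inducts on the number $n$ of distinct observations. The key step (its \eqref{help_19}) orders the points so that $x_1$ and $x_n$ are extremal, uses sensitivity once to find $r_1,r_n$ with $\vartheta_{r_1+r_n}(r_1\odot x_1,r_n\odot x_n)$ inside $(\vartheta-\vare,\vartheta)$, and then rescales so that one of the two extremes is \emph{completely consumed} by this single pair ($\min(qk_1-p_1,qk_n-p_n)=0$). The remaining sample has at most $n-1$ distinct observations (with new multiplicities --- this is why the weighted form is needed), the induction hypothesis supplies a matrix for it, and the full matrix $\widehat{\mathsf{m}}$ is obtained by adjoining the single new edge $\{1,n\}$ with weights $kp_1,kp_n$; the row sums and the bound $\#\Delta(\widehat{\mathsf{m}})\le n-1$ are then verified directly. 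In effect, only one edge is ever balanced at a time, which is what your construction is missing. If you want to salvage your approach, you would need either a quantitative strengthening of sensitivity (control of the admissible weight ratios) or a reduction that removes one observation per step --- at which point you have rediscovered the paper's induction.
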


Note that, for each $n\geq 2$, $n\in\NN$, and $\mathsf{m}\in\mathsf{M}_n$, the matrix $\mathsf{m}$ can have at most $2n-2$ non-zero off-diagonal entries.
In Example \ref{Ex3}, we write the set $\mathsf{M}_3$ in a more simpler form.
To verify the first equality in \eqref{help_psi_2sample_formula}, we are going to prove the following more general theorem
 from which the first equality in \eqref{help_psi_2sample_formula} can be deduced with the choices $k_1:=\cdots:=k_n:=1$.
The proof of the second equality in \eqref{help_psi_2sample_formula} is completely analogous, therefore, it is omitted.

\begin{Thm}\label{Thm_2_to_n+w}
Let $\psi\in\Psi[T,W_2](X,\Theta)$.
Then, for all $n\geq 2$, $n\in\NN$, $x_1,\ldots,x_n\in X$, and $k_1,\dots,k_n\in\NN$, we have
\Eq{help_psi_2sample_formula_w}{
   \vartheta_{k_1+\dots+k_n}&(k_1\odot x_1,\dots,k_n\odot x_n)\\
   &=\sup_{\mathsf{m}\in \mathsf{M}_n(k_1,\dots,k_n)}\min_{(i,j)\in\Delta(\mathsf{m}) }
    \vartheta_{\mathsf{m}_{i,j}+\mathsf{m}_{j,i}}(\mathsf{m}_{i,j}\odot x_i, \mathsf{m}_{j,i}\odot x_j),
}
where the set $\mathsf{M}_n(k_1,\dots,k_n)$ of nonnegative integer-valued $n\times n$ matrices defined by
 \begin{align*}
  \mathsf{M}_n(k_1,\dots,k_n)
   :=\bigcup_{k=1}^\infty\bigg\{& \mathsf{m}=(\mathsf{m}_{i,j})_{i,j=1}^n \in\ZZ_+^{n\times n}\colon \sum_{j\in\{1,\dots,n\}\setminus\{i\}} \mathsf{m}_{i,j} =k_i k,\; i\in\{1,\ldots,n\} \\
  &\hspace{4.3cm}\hbox{and }\ \#\Delta(\mathsf{m})\leq n-1 \bigg\}.
 \end{align*}
\end{Thm}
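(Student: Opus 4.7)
The plan is to prove the equality in two halves---the inequalities $\vartheta^*\geq\sup$ and $\vartheta^*\leq\sup$, where $\vartheta^*:=\vartheta_{k_1+\dots+k_n}(k_1\odot x_1,\dots,k_n\odot x_n)$.

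For the $\geq$ bound I would fix any $\mathsf{m}\in\mathsf{M}_n(k_1,\dots,k_n)$ with common row factor $k\in\NN$ and view the extended sample $(kk_1\odot x_1,\dots,kk_n\odot x_n)$ in two ways. On one hand it is the concatenation of $k$ identical copies of $(k_1\odot x_1,\dots,k_n\odot x_n)$, each having estimator value $\vartheta^*$, so Theorem \ref{Thm_psi_becsles_mean_prop} pins the concatenated estimator also at $\vartheta^*$. On the other hand, the row-sum condition $\sum_{j\neq i}\mathsf{m}_{i,j}=kk_i$ ensures that (up to a permutation, which does not affect the estimator) this same extended sample is the concatenation, over $(i,j)\in\Delta(\mathsf{m})$, of the two-type blocks $(\mathsf{m}_{i,j}\odot x_i,\mathsf{m}_{j,i}\odot x_j)$. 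A second application of Theorem \ref{Thm_psi_becsles_mean_prop} then yields
\[
\vartheta^*\geq\min_{(i,j)\in\Delta(\mathsf{m})}\vartheta_{\mathsf{m}_{i,j}+\mathsf{m}_{j,i}}(\mathsf{m}_{i,j}\odot x_i,\mathsf{m}_{j,i}\odot x_j),
\]
and taking the supremum over $\mathsf{m}$ delivers this half of the identity.

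For the $\leq$ direction I would fix $\varepsilon>0$ and construct one $\mathsf{m}\in\mathsf{M}_n(k_1,\dots,k_n)$ whose every two-type block estimator exceeds $\vartheta^*-\varepsilon$. By Theorem \ref{Thm_psi_becsles_mean_prop}, $\vartheta^*$ lies in the closed interval $[\min_i\vartheta_1(x_i),\max_i\vartheta_1(x_i)]$, so the index set partitions as $L\sqcup Z\sqcup R$ depending on whether $\vartheta_1(x_i)<\vartheta^*$, $=\vartheta^*$, or $>\vartheta^*$. In the non-degenerate situation both $L$ and $R$ are nonempty, and I would pick a spanning tree $T$ of $\{1,\dots,n\}$ every edge of which either crosses between $L$ and $R$ or attaches a member of $Z$. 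For each crossing edge $\{i,j\}$ with $\vartheta_1(x_i)<\vartheta^*<\vartheta_1(x_j)$, Lemma \ref{Lem_infinit} supplies integer pairs $(a,b)$ for which $\vartheta_{a+b}(a\odot x_i,b\odot x_j)$ is arbitrarily close to $\vartheta^*$, with a freely prescribable ratio $a/b$. Propagating these weights along $T$ from the leaves inward, scaled by a common integer $k$, produces a matrix $\mathsf{m}$ supported inside $T$ (so $\#\Delta(\mathsf{m})\leq n-1$ automatically), having row sums $(k_1k,\dots,k_nk)$, and with every block estimator within $\varepsilon$ of $\vartheta^*$. The compatibility condition at the root of $T$ coincides with the consistency identity already used in the $\geq$ step, so no further obstruction arises.

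The main difficulty is the propagation step: I must simultaneously realize the row-sum proportionalities, approximate each block estimator to within $\varepsilon$ of $\vartheta^*$, and keep all edge multiplicities integer. The acyclicity of $T$ turns the row-sum constraints into a triangular recursion, and Lemma \ref{Lem_infinit} controls each block estimator individually, but reconciling these with integrality requires choosing the common scale $k$ large enough that rounding the possibly irrational ideal ratios at each edge does not dislodge any single block estimator from $(\vartheta^*-\varepsilon,\vartheta^*+\varepsilon)$. The degenerate configurations---an index with $\vartheta_1(x_i)=\vartheta^*$, or $\vartheta^*$ attained at an extreme $\vartheta_1(x_i)$---need separate but easier treatment, since in the latter case the strict mean-value part of Theorem \ref{Thm_psi_becsles_mean_prop} forces all $\vartheta_1(x_i)$ to agree with $\vartheta^*$ and diagonal-type matrices $\mathsf{m}$ immediately realize the supremum.
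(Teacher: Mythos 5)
Your first half (the $\geq$ inequality) is exactly the paper's Step 1 and is correct. The second half, however, has a genuine gap at its core. You invoke Lemma \ref{Lem_infinit} as supplying, for a crossing edge $\{i,j\}$, integer pairs $(a,b)$ with $\vartheta_{a+b}(a\odot x_i,b\odot x_j)$ close to $\vartheta^*$ \emph{and} ``with a freely prescribable ratio $a/b$''. The lemma gives no such freedom: for a fixed pair $(x_i,x_j)$ the value $\vartheta_{a+b}(a\odot x_i,b\odot x_j)=\vartheta_2^{(a,b)}(x_i,x_j)$ depends only on the ratio $a/b$, so requiring the block estimator to lie near $\vartheta^*$ essentially pins that ratio rather than leaving it free. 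Consequently your ``triangular recursion'' along the spanning tree must reconcile $n$ row-sum constraints with $n-1$ pinned edge ratios using only the $2(n-1)$ edge weights and the global scale $k$ --- an essentially exactly determined system whose nonnegative integer solvability is the entire content of the theorem, not a formality. Already for $\psi(x,t)=x-t$, $x_1=0$, $x_2=1$, $x_3=10$, $k_1=k_2=k_3=1$, the only admissible tree is the star at vertex $3$, and the requirement that both block estimators exceed $\vartheta^*-\vare=11/3-\vare$ forces $\mathsf{m}_{3,1}\gtrsim 11k/19$ and $\mathsf{m}_{3,2}\gtrsim 8k/19$ against the budget $\mathsf{m}_{3,1}+\mathsf{m}_{3,2}=k$: the thresholds sum to exactly the budget, so feasibility hinges on a global balance property of $\vartheta^*$ that you neither identify nor prove. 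Your appeal to ``the consistency identity already used in the $\geq$ step'' points at nothing, since that step uses only concatenation and Theorem \ref{Thm_psi_becsles_mean_prop}; for a general $\psi\in\Psi[T,W_2](X,\Theta)$ (no $[Z]$, no continuity in $t$) I do not see how to verify such a balance directly, nor how to choose the tree so that the resulting system admits a nonnegative solution.

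The paper circumvents all of this by induction on the number of distinct observations: after ordering so that $\vartheta_1(x_1)\leq\cdots\leq\vartheta_1(x_n)$, sensitivity is used \emph{once} to produce a single block $(r_1\odot x_1,r_n\odot x_n)$ with estimator in $(\vartheta^*-\vare,\vartheta^*)$, and the sample is rescaled so that this block consumes all of either $x_1$ or $x_n$. The two-sided sandwich \eqref{help_comp8_uj}--\eqref{help_comp9_uj} then forces the complementary sub-sample (which has at most $n-1$ distinct values) to have estimator $\geq\vartheta^*$, so the induction hypothesis decomposes it into at most $n-2$ further blocks, and $\#\Delta(\widehat{\mathsf{m}})\leq n-1$ comes for free. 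If you want to salvage a one-shot tree construction you would have to prove feasibility of your system, which amounts to re-proving the theorem. A smaller but real error: in your degenerate case you invoke the strict-inequality clause of Theorem \ref{Thm_psi_becsles_mean_prop}, but that clause requires $\psi\in\Psi[Z](X,\Theta)$, which is not assumed here; the case $\vartheta^*=\min_i\vartheta_1(x_i)$ should instead be handled by noting that every two-type block estimator is automatically $\geq\min_i\vartheta_1(x_i)$.
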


\begin{proof}
Note that the diagonal entries $\mathsf{m}_{i,i}$, $i\in\{1,\ldots,n\}$ for a matrix $\mathsf{m}\in \mathsf{M}_n$
 do not play any role in any of the formulae of the theorem (since $\mathsf{m}_{i,i}$ is excluded in the summation
 $\sum_{j\in\{1,\dots,n\}\setminus\{i\}}$),
 so without loss of generality we can and do assume that $\mathsf{m}_{i,i}=0$, $i\in\{1,\ldots,n\}$.
Consequently, for each $n\geq 2$, $n\in\NN$, $k_1,\dots,k_n\in\NN$, and $\mathsf{m}\in \mathsf{M}_n(k_1,\dots,k_n)$,
the matrix $\mathsf{m}$ can have at most $2n-2$ non-zero entries.

{\sl Step 1.} Let $n\geq 2$, $n\in\NN$, $x_1,\ldots,x_n\in X$ and $k_1,\ldots,k_n\in\NN$ be fixed arbitrarily.
Note that, for each $k\in\NN$, we have $\sign(\sum_{i=1}^n k_i\psi(x_i,t)) = \sign(\sum_{i=1}^n k_ik\psi(x_i,t))$, $t\in\Theta$, and hence
 \begin{align*}
 \vartheta_{k_1+\cdots+k_n}(k_1\odot x_1,\ldots,k_n\odot x_n)
 =\vartheta_{(k_1+\cdots+k_n)k}( (k_1k)\odot x_1,\ldots,(k_nk)\odot x_n).
 \end{align*}
Consequently, for each $k\in\NN$ and $\mathsf{m}=(\mathsf{m}_{i,j})_{i,j=1}^n\in\mathsf{M}_n(k_1,\ldots,k_n)$ with
 $\sum_{j\in\{1,\dots,n\}\setminus\{i\}} \mathsf{m}_{i,j} =k_ik$, $i=1,\ldots,n$, using that $\psi\in\Psi[T](X,\Theta)$,
 Theorem~\ref{Thm_psi_becsles_mean_prop} implies that
 \begin{align}\label{help_18}
  \begin{split}
 \vartheta_{k_1+\cdots+k_n}(k_1\odot x_1,\ldots,k_n\odot x_n)
    & = \vartheta_{(k_1+\cdots+k_n)k}( (k_1k)\odot x_1,\ldots,(k_nk)\odot x_n)  \\
    &\geq \min_{(i,j)\in\Delta(\mathsf{m})}
     \vartheta_{\mathsf{m}_{i,j}+\mathsf{m}_{j,i}}(\mathsf{m}_{i,j}\odot x_i, \mathsf{m}_{j,i}\odot x_j).
  \end{split}
 \end{align}
Indeed, for each $\alpha\in\{1,\ldots,n\}$, we have
 \begin{align*}
  &\big\{ \mathsf{m}_{i,j} : 1\leq i < j \leq n, \; i=\alpha \big\}
      \cup \big\{ \mathsf{m}_{j,i} : 1\leq i < j \leq n, \; j=\alpha \big\} \\
  &\qquad = \big\{ \mathsf{m}_{\alpha,j} : j=\alpha+1,\ldots,n \}
        \cup \big\{ \mathsf{m}_{\alpha,i} : i=1,\ldots,\alpha -1 \},
 \end{align*}
and $\sum_{j=\alpha+1}^n \mathsf{m}_{\alpha,j} + \sum_{i=1}^{\alpha-1} \mathsf{m}_{\alpha,i} = k_\alpha k$.
Therefore, the concatenation of the tuples $(\mathsf{m}_{i,j}\odot x_i, \mathsf{m}_{j,i}\odot x_j)$ for all $(i,j)\in\Delta(\mathsf{m})$
is equal to a permutation of the tuple $( (k_1k)\odot x_1,\ldots,(k_nk)\odot x_n)$.
This, due to Theorem~\ref{Thm_psi_becsles_mean_prop}, imply that the inequality in \eqref{help_18} is valid.
Consequently, the left hand side of \eqref{help_psi_2sample_formula_w} is greater than or equal to its right hand side.
Note that the above argument also shows that the left hand side of \eqref{help_psi_2sample_formula_w}
 is greater than or equal to its right hand side by taking the supremum over all the matrices
 $\mathsf{m} = (\mathsf{m}_{i,j})_{i,j=1}^n \in\ZZ_+^{n\times n}$ that satisfy
 $\sum_{j\in\{1,\dots,n\}\setminus\{i\}} \mathsf{m}_{i,j} =k_i k$, $i\in\{1,\ldots,n\}$, with some $k\in\NN$
 (i.e., without assuming that $\#\Delta(\mathsf{m})\leq n-1$ holds) instead of over $\mathsf{M}_n(k_1,\dots,k_n)$.

{\sl Step 2.}
It remains to check that the left hand side of \eqref{help_psi_2sample_formula_w} cannot be strictly larger than its right hand side.
In case of $n=2$, it holds trivially, since
 \[
 \mathsf{M}_2(k_1,k_2)
  = \left\{  \begin{bmatrix}
                0 & kk_1 \\
                kk_2 & 0 \\
            \end{bmatrix}: \quad k\in\NN
    \right\},
 \]
and $\vartheta_{k(k_1+k_2)}((kk_1)\odot x_1, (kk_2)\odot x_2) = \vartheta_{k_1+k_2}(k_1\odot x_1, k_2\odot x_2)$, $k\in\NN$.
Without loss of generality we can and do assume that $n\geq 3$ and $\vartheta_{1}(x_1) \leq \vartheta_{1}(x_2) \leq \cdots \leq \vartheta_{1}(x_n)$.
In what follows, we prove an intermediate formula for $\vartheta_{k_1+\dots+k_n}(k_1\odot x_1,\dots,k_n\odot x_n)$, which
 can be well-used in Step 3 for proving \eqref{help_psi_2sample_formula_w} by induction with respect to $n$.
Namely, we now show that
 \Eq{help_19}{
  &\vartheta_{k_1+\dots+k_n}(k_1\odot x_1,\dots,k_n\odot x_n)\\
  &=\sup_{\big\{q, p_1, p_n \in\NN: \, \min(qk_1-p_1, qk_n-p_n)=0 \big\}}\min \Big( \vartheta_{p_1+p_n}(p_1\odot x_1,p_n\odot x_n),\\
  &\hspace{1cm}\vartheta_{(k_1+\dots+k_n)q - (p_1+p_n)}
      \big((qk_1-p_1)\odot x_1, (qk_2)\odot x_2,\dots, (q k_{n-1})\odot x_{n-1}, (qk_n-p_n)\odot x_n\big) \Big).
 }
Using that $\psi\in\Psi[T](X,\Theta)$, Theorem~\ref{Thm_psi_becsles_mean_prop} and the equality
 \[
   \vartheta_{k_1+\dots+k_n}(k_1\odot x_1,\dots,k_n\odot x_n)
     = \vartheta_{(k_1+\dots+k_n)q}((qk_1)\odot x_1,\dots,(qk_n)\odot x_n),
   \qquad q\in\NN,
 \]
we get that the left hand side of \eqref{help_19} is greater than or equal to its right hand side.

It remains to check that the left hand side of \eqref{help_19} is less than or equal to its right hand side.
Using that $\psi\in\Psi[T](X,\Theta)$ and the assumption $\vartheta_{1}(x_1) \leq \vartheta_{1}(x_2) \leq \cdots \leq \vartheta_{1}(x_n)$,
 Theorem~\ref{Thm_psi_becsles_mean_prop} yields that
 \Eq{*}{
     \vartheta_1(x_1)&=\min(\vartheta_1(x_1),\dots,\vartheta_1(x_n))
    = \min(\vartheta_{k_1}(k_1\odot x_1),\ldots,\vartheta_{k_n}(k_n\odot x_n)) \\
    &\leq  \vartheta_{k_1+\dots+k_n}(k_1\odot x_1,\dots,k_n\odot x_n)
    \leq \max(\vartheta_{k_1}(k_1\odot x_1),\ldots,\vartheta_{k_n}(k_n\odot x_n))\\
          &= \max(\vartheta_1(x_1),\dots,\vartheta_1(x_n)) =\vartheta_1(x_n).
 }
We now consider two cases according to $\vartheta_1(x_1) = \vartheta_{k_1+\dots+k_n}(k_1\odot x_1,\dots,k_n\odot x_n)$
 and $\vartheta_1(x_1) < \vartheta_{k_1+\dots+k_n}(k_1\odot x_1,\dots,k_n\odot x_n)$.

As the first case, suppose that $\vartheta_1(x_1) = \vartheta_{k_1+\dots+k_n}(k_1\odot x_1,\dots,k_n\odot x_n)$.
Then, using again $\psi\in\Psi[T](X,\Theta)$, Theorem~\ref{Thm_psi_becsles_mean_prop} and that $\vartheta_1(x_i)\geq \vartheta_1(x_1)$, $i=1,\ldots,n$, we have
 \begin{align*}
   &\vartheta_{(k_1+\dots+k_n)q - (p_1+p_n)}
                    \big((qk_1-p_1)\odot x_1, (qk_2)\odot x_2,\dots, (q k_{n-1})\odot x_{n-1}, (qk_n-p_n)\odot x_n\big) \\
   &\qquad  \geq \vartheta_1(x_1) = \vartheta_{k_1+\dots+k_n}(k_1\odot x_1,\dots,k_n\odot x_n),
 \end{align*}
 and
 \[
   \vartheta_{p_1+p_n}(p_1\odot x_1,p_n\odot x_n)
       \geq \vartheta_1(x_1) = \vartheta_{k_1+\dots+k_n}(k_1\odot x_1,\dots,k_n\odot x_n)
 \]
 for all $q,p_1,p_n\in\NN$ satisfying $qk_1-p_1,qk_n-p_n\in\ZZ_+$.
This yields that, in this case, the right hand side of \eqref{help_19} is greater than equal to its left hand side, as desired.

As the second case, we can assume that $\vartheta_1(x_1) < \vartheta_{k_1+\cdots+k_n}(k_1\odot x_1,\ldots,k_n\odot x_n)$.
Choose a sufficiently small $\vare>0$ such that
 \[
   \vartheta_1(x_1) < \vartheta_{k_1+\cdots+k_n}(k_1\odot x_1,\ldots,k_n\odot x_n) - \vare.
 \]
Using that $\psi\in\Psi[T,W_2](X,\Theta)$, by Lemma~\ref{Lem_infinit}, the $\psi$-estimator $\vartheta_\psi:\bigcup_{n=1}^\infty X^n\to\Theta$,
 \[
    \vartheta_\psi(x_1,\ldots,x_n):= \vartheta_{n,\psi}(x_1,\ldots,x_n) = \vartheta_n(x_1,\ldots,x_n),
     \qquad n\in\NN,\quad x_1,\ldots,x_n\in X,
 \]
 is sensitive, and since
 \[
 \vartheta_1(x_1) < \vartheta_{k_1+\cdots+k_n}(k_1\odot x_1,\ldots,k_n\odot x_n) - \vare
                         < \vartheta_{k_1+\cdots+k_n}(k_1\odot x_1,\ldots,k_n\odot x_n)
                         \leq \vartheta_1(x_n),
 \]
 there exists $r_1,r_n\in\NN$ such that
 \begin{align}\label{help_comp5_uj}
  \begin{split}
   &\vartheta_{k_1+\cdots+k_n}(k_1\odot x_1,\ldots,k_n\odot x_n) - \vare   \\
   &\qquad  < \vartheta_{r_1+r_n}(r_1\odot x_1,r_n\odot x_n)
     < \vartheta_{k_1+\cdots+k_n}(k_1\odot x_1,\ldots,k_n\odot x_n).
  \end{split}
 \end{align}
In the case $\frac{k_1}{r_1}\leq \frac{k_n}{r_n}$, let
\Eq{*}{
  q:=r_1,\qquad p_1:=k_1r_1,\qquad p_n:=k_1r_n,
}
while, in the case $\frac{k_1}{r_1}>\frac{k_n}{r_n}$, let
\Eq{*}{
  q:=r_n,\qquad p_1:=k_nr_1,\qquad p_n:=k_nr_n.
}
Then $q,p_1,p_n\in\NN$ and $\min(qk_1-p_1, qk_n-p_n)=0$.
Using that $\psi\in\Psi[T](X,\Theta)$, Theorem~\ref{Thm_psi_becsles_mean_prop} and the symmetry of $\vartheta_n$, $n\in\NN$ (see the paragraph after Definition \ref{Def_Tn}), we have
 \begin{align}\label{help_comp8_uj}
  \begin{split}
  &\vartheta_{k_1+\cdots+k_n}(k_1\odot x_1,\ldots,k_n\odot x_n)
    = \vartheta_{q(k_1+\cdots+k_n)}((qk_1)\odot x_1,\ldots,(qk_n)\odot x_n) \\
  & \leq \max\Big(  \vartheta_{q(k_1+\cdots+k_n)-(p_1+p_n)}((qk_1-p_1)\odot x_1,(qk_2)\odot x_2,\ldots,(qk_{n-1})\odot x_{n-1},(qk_n-p_n)\odot x_n),\\
  &\phantom{\leq \max\Big(\;}
     \vartheta_{p_1+p_n}(p_1\odot x_1,p_n\odot x_n) \Big),
 \end{split}
 \end{align}
 and
 \begin{align}\label{help_comp9_uj}
  \begin{split}
  &\vartheta_{k_1+\cdots+k_n}(k_1\odot x_1,\ldots,k_n\odot x_n)
    = \vartheta_{q(k_1+\cdots+k_n)}((qk_1)\odot x_1,\ldots,(qk_n)\odot x_n) \\
  & \geq \min\Big(  \vartheta_{q(k_1+\cdots+k_n)-(p_1+p_n)}((qk_1-p_1)\odot x_1,(qk_2)\odot x_2,\ldots,(qk_{n-1})\odot x_{n-1},(qk_n-p_n)\odot x_n),\\
  &\phantom{\leq \max\Big(\;}
     \vartheta_{p_1+p_n}(p_1\odot x_1,p_n\odot x_n) \Big).
 \end{split}
 \end{align}
Concerning the inequalities \eqref{help_comp8_uj} and \eqref{help_comp9_uj}, we note that even if $qk_1-p_1 = qk_n - p_n=0$, then the tuple
 $((qk_1-p_1)\odot x_1,(qk_2)\odot x_2,\ldots,(qk_{n-1})\odot x_{n-1},(qk_n-p_n)\odot x_n)$ is not the empty tuple
 due to $n\geq 3$.
By the second inequality in \eqref{help_comp5_uj}, the symmetry of $\vartheta_n$, $n\in\NN$, and observing that $(p_1,p_n)=(k_ir_1,k_ir_n)$ holds either for $i=1$ or for $i=n$, we get
 \begin{align*}
 \vartheta_{p_1+p_n}(p_1\odot x_1,p_n\odot x_n)
 =\vartheta_{r_1+r_n}(r_1\odot x_1,r_n\odot x_n)
         < \vartheta_{k_1+\cdots+k_n}(k_1\odot x_1,\ldots,k_n\odot x_n).
 \end{align*}
Consequently, the maximum at the right hand side of the inequality \eqref{help_comp8_uj} is equal to
 \[
  \vartheta_{q(k_1+\cdots+k_n)-(p_1+p_n)}((qk_1-p_1)\odot x_1,(qk_2)\odot x_2,\ldots,(qk_{n-1})\odot x_{n-1},(qk_n-p_n)\odot x_n),
 \]
 since otherwise we would arrive at a contradiction.
This yields that the minimum at the right hand side of the inequality \eqref{help_comp9_uj} is equal to
 \[
   \vartheta_{p_1+p_n}(p_1\odot x_1,p_n\odot x_n).
 \]

All in all, using that $(p_1,p_n)=(k_ir_1,k_ir_n)$ holds either for $i=1$ or for $i=n$, and the first inequality in \eqref{help_comp5_uj}, for all sufficiently small $\vare>0$,
there exist $q,p_1,p_n\in\NN$ such that $\min(qk_1-p_1, qk_n-p_n)=0$ and
 \begin{align*}
  &\min\Big( \vartheta_{(k_1+\dots+k_n)q - (p_1+p_n)}
  \big((qk_1-p_1)\odot x_1, (qk_2)\odot x_2,\dots, (q k_{n-1})\odot x_{n-1}, (qk_n-p_n)\odot x_n\big),\\
  &\phantom{\min\Big\{\,} \vartheta_{p_1+p_n}(p_1\odot x_1,p_n\odot x_n) \Big)\\
  &= \vartheta_{p_1+p_n}(p_1\odot x_1,p_n\odot x_n)
   = \vartheta_{r_1+r_n}(r_1\odot x_1,r_n\odot x_n)
   > \vartheta_{k_1+\cdots+k_n}(k_1\odot x_1,\ldots,k_n\odot x_n) - \vare.
 \end{align*}
This implies that the right hand side of \eqref{help_19} is greater than or equal to its left hand side minus $\vare$,
 and, by taking the limit $\vare\downarrow 0$, we get that the right hand side of \eqref{help_19} is greater than or equal
 to its left hand side, as desired.

{\sl Step 3.}
Using Steps 1 and 2, we are going to prove \eqref{help_psi_2sample_formula_w} by induction with respect to $n\geq 3$.
Let us suppose that \eqref{help_psi_2sample_formula_w} holds for $2,\ldots,n-1$, and show that it holds for $n$ as well.
Let $\vare>0$, $x_1,\ldots,x_n\in X$ and $k_1,\ldots,k_n\in\NN$.
By Step 2, there exist $q,p_1,p_n\in\NN$ such that $\min(qk_1-p_1,qk_n-p_n) = 0$, and the following two inequalities hold
 \begin{align}\label{help_26}
  \begin{split}
 &\vartheta_{(k_1+\dots+k_n)q - (p_1+p_n)}
                    \big((qk_1-p_1)\odot x_1, (qk_2)\odot x_2,\dots, (q k_{n-1})\odot x_{n-1}, (qk_n-p_n)\odot x_n\big)\\
 &\qquad > \vartheta_{k_1+\cdots+k_n}(k_1\odot x_1,\ldots,k_n\odot x_n) - \vare,
  \end{split}
 \end{align}
 and
 \begin{align}\label{help_20}
 \vartheta_{p_1+p_n}(p_1\odot x_1,p_n\odot x_n) > \vartheta_{k_1+\cdots+k_n}(k_1\odot x_1,\ldots,k_n\odot x_n) - \vare.
 \end{align}
We can distinguish three cases, namely,
\begin{center}
 \begin{tabular}{ll}
   Case I: & $qk_n-p_n = 0$, $qk_1-p_1 \ne 0$, \\
   Case II: & $qk_n-p_n \ne 0$, $qk_1-p_1 = 0$, \\
   Case III: & $qk_n-p_n = 0$, $qk_1-p_1= 0$.
 \end{tabular}
\end{center}
In each of these cases, we are going to show that there exists $\widehat{\mathsf{m}}=(\widehat{\mathsf{m}}_{i,j})_{i,j=1}^n\in M_n(k_1,\ldots, k_n)$ such that
 \Eq{help_20.5}{
  \min_{(i,j)\in\Delta(\widehat{\mathsf{m}})}
    \vartheta_{\widehat{\mathsf{m}}_{i,j}+\widehat{\mathsf{m}}_{j,i}}( \widehat{\mathsf{m}}_{i,j}\odot x_i,\widehat{\mathsf{m}}_{j,i}\odot x_j)
      > \vartheta_{k_1+\cdots+k_n}(k_1\odot x_1,\ldots,k_n\odot x_n) - 2\vare.
 }

In Case I, by the induction hypothesis, there exist $\mathsf{m}=(\mathsf{m}_{i,j})_{i,j=1}^{n-1} \in\ZZ_+^{(n-1)\times (n-1)}$ and $k\in\NN$ such that
\Eq{help_20a}{
  \sum_{j\in\{1,\dots,n-1\}\setminus\{1\}} \mathsf{m}_{1,j} =k(qk_1-p_1),
   \quad \sum_{j\in\{1,\dots,n-1\}\setminus\{i\}} \mathsf{m}_{i,j} = k(qk_i),\qquad i\in\{2,\ldots,n-1\},
}
\Eq{help_20b}{
  \#\Delta(\mathsf{m})\leq n-2,
}
and
\Eq{help_21}{
   &\min_{(i,j)\in\Delta(\mathsf{m})}
    \vartheta_{\mathsf{m}_{i,j}+\mathsf{m}_{j,i}}(\mathsf{m}_{i,j}\odot x_i, \mathsf{m}_{j,i}\odot x_j)\\
      &\hspace{18mm}> \vartheta_{(k_1+\cdots+k_{n-1})q-p_1}((qk_1-p_1)\odot x_1,(qk_2)\odot x_2\ldots,(qk_{n-1})\odot x_{n-1}) - \vare\\
      &\hspace{18mm}> \vartheta_{k_1+\cdots+k_n}(k_1\odot x_1,\ldots,k_n\odot x_n) - 2\vare,
}
where the last inequality is a consequence of $\eqref{help_26}$.
Let $\widehat{\mathsf{m}}=(\widehat{\mathsf{m}}_{i,j})_{i,j=1}^n \in\ZZ_+^{n\times n}$ be defined by
 \[
   \widehat{\mathsf{m}} = \begin{bmatrix}
                            \mathsf{m}_{1,1} & \mathsf{m}_{1,2}  & \ldots & \mathsf{m}_{1,n-1} & kp_1 \\
                            \mathsf{m}_{2,1} & \mathsf{m}_{2,2} & \ldots & \mathsf{m}_{2,n-1}  & 0 \\
                            \vdots &  \vdots & \ddots &  \vdots & \vdots \\
                            \mathsf{m}_{n-1,1} & \mathsf{m}_{n-1,2} & \ldots & \mathsf{m}_{n-1,n-1} & 0 \\
                            kp_n &  0 & \ldots & 0 & 0 \\
                          \end{bmatrix}.
 \]
Then, using \eqref{help_20a} and that $kp_n=k(qk_n)$, we have
 \begin{align}\label{help_22}
  \sum_{j\in\{1,\dots,n\}\setminus\{i\}} \widehat{\mathsf{m}}_{i,j} =k(qk_i),\qquad i=1,\ldots, n,
 \end{align}
and, applying \eqref{help_20b} and that $\widehat{\mathsf{m}}_{1,n} + \widehat{\mathsf{m}}_{n,1} = k(p_1+p_n)\geq 1$,
 we can see that
 \[
 \#\Delta(\widehat{\mathsf{m}})
 =\#\Delta(\mathsf{m})+1\leq n-1.
 \]
Therefore, it follows that $\widehat{\mathsf{m}}\in M_n(k_1,\ldots,k_n)$.
By \eqref{help_20} and \eqref{help_21}, we obtain
 \Eq{*}{
  \min_{(i.j)\in\Delta(\widehat{\mathsf{m}})}
    &\vartheta_{\widehat{\mathsf{m}}_{i,j}+\widehat{\mathsf{m}}_{j,i}}( \widehat{\mathsf{m}}_{i,j}\odot x_i,\widehat{\mathsf{m}}_{j,i}\odot x_j)\\
    &=\min\bigg(
    \vartheta_{k(p_1+p_n)}((kp_1)\odot x_1,(kp_n)\odot x_n), \min_{(i,j)\in\Delta(\mathsf{m})}
    \vartheta_{\mathsf{m}_{i,j}+\mathsf{m}_{j,i}}(\mathsf{m}_{i,j}\odot x_i, \mathsf{m}_{j,i}\odot x_j)\bigg)\\
    &=\min\bigg(
    \vartheta_{p_1+p_n}( p_1\odot x_1, p_n\odot x_n), \min_{(i,j)\in\Delta(\mathsf{m})}
    \vartheta_{\mathsf{m}_{i,j}+\mathsf{m}_{j,i}}(\mathsf{m}_{i,j}\odot x_i, \mathsf{m}_{j,i}\odot x_j)\bigg)\\
      &> \vartheta_{k_1+\cdots+k_n}(k_1\odot x_1,\ldots,k_n\odot x_n) - 2\vare.
 }
This inequality shows that \eqref{help_20.5} holds in Case I.

In Case II, by the induction hypothesis, there exist $\mathsf{m}=(\mathsf{m}_{i,j})_{i,j=2}^n \in\ZZ_+^{(n-1)\times (n-1)}$ and $k\in\NN$ such that
 \[
  \sum_{j\in\{2,\dots,n\}\setminus\{i\}} \mathsf{m}_{i,j} =k(qk_i),\qquad i=2,\ldots,n-1,
   \qquad \sum_{j\in\{2,\dots,n\}\setminus\{n\}} \mathsf{m}_{n,j} = k(qk_n-p_n),
 \]
 \[
  \#\Delta(\mathsf{m})\leq n-2,
 \]
 and
 \Eq{help_23}{
   &\min_{(i,j)\in\Delta(\mathsf{m})}
    \vartheta_{\mathsf{m}_{i,j}+\mathsf{m}_{j,i}}(\mathsf{m}_{i,j}\odot x_i, \mathsf{m}_{j,i}\odot x_j)\\
      &\hspace{18mm}> \vartheta_{(k_2+\cdots+k_n)q-p_n}((qk_2)\odot x_2,\ldots,(qk_{n-1})\odot x_{n-1},(qk_n-p_n)\odot x_n) - \vare\\
      &\hspace{18mm}>\vartheta_{k_1+\cdots+k_n}(k_1\odot x_1,\ldots,k_n\odot x_n) - 2\vare,
 }
where the last inequality is a consequence of $\eqref{help_26}$.
Let $\widehat{\mathsf{m}}=(\widehat{\mathsf{m}}_{i,j})_{i,j=1}^n \in\ZZ_+^{n\times n}$ be defined by
 \[
   \widehat{\mathsf{m}} = \begin{bmatrix}
                            0 & 0  & \ldots & 0 & kp_1 \\
                            0 & \mathsf{m}_{2,2} & \ldots & \mathsf{m}_{2,n-1}  & \mathsf{m}_{2,n} \\
                            \vdots &  \vdots & \ddots &  \vdots & \vdots \\
                             0 & \mathsf{m}_{n-1,2} & \ldots & \mathsf{m}_{n-1,n-1} & \mathsf{m}_{n-1,n} \\
                            kp_n &  \mathsf{m}_{n,2} & \ldots & \mathsf{m}_{n,n-1} & \mathsf{m}_{n,n} \\
                          \end{bmatrix}.
 \]
Then \eqref{help_22} holds since $kp_1=k(qk_1)$ and we also have that $\#\Delta(\widehat{\mathsf{m}})
 =\#\Delta(\mathsf{m})+1\leq n-1$. Therefore, $\widehat{\mathsf{m}}\in M_n(k_1,\ldots,k_n)$.
Using \eqref{help_20} and \eqref{help_23}, we can finish the proof just as in Case I, and consequently, \eqref{help_20.5} holds in Case II as well.

In Case III, by the induction hypothesis, there exist $\mathsf{m}=(\mathsf{m}_{i,j})_{i,j=2}^{n-1} \in\ZZ_+^{(n-2)\times (n-2)}$ and $k\in\NN$ such that
 \[
  \sum_{j\in\{2,\dots,n-1\}\setminus\{i\}} \mathsf{m}_{i,j} =k(qk_i),\qquad i\in\{2,\ldots,n-1\},
 \]
 \[
  \#\Delta(\mathsf{m})\leq n-3,
 \]
and
 \Eq{help_24}{
   \min_{(i,j)\in\Delta(\mathsf{m})}
    \vartheta_{\mathsf{m}_{i,j}+\mathsf{m}_{j,i}}(\mathsf{m}_{i,j}\odot x_i, \mathsf{m}_{j,i}\odot x_j)
      &> \vartheta_{q(k_2+\cdots+k_{n-1})}((qk_2)\odot x_2,\ldots,(qk_{n-1})\odot x_{n-1}) - \vare\\
      &>\vartheta_{k_1+\cdots+k_n}(k_1\odot x_1,\ldots,k_n\odot x_n) - 2\vare,
 }
where the last inequality is a consequence of $\eqref{help_26}$.
Let $\widehat{\mathsf{m}}=(\widehat{\mathsf{m}}_{i,j})_{i,j=1}^n \in\ZZ_+^{n\times n}$ be defined by
 \[
   \widehat{\mathsf{m}} = \begin{bmatrix}
                            0 & 0  & \ldots & 0 & kp_1 \\
                            0 & \mathsf{m}_{2,2} & \ldots & \mathsf{m}_{2,n-1}  & 0 \\
                            \vdots &  \vdots & \ddots &  \vdots & \vdots \\
                             0 & \mathsf{m}_{n-1,2} & \ldots & \mathsf{m}_{n-1,n-1} & 0 \\
                            kp_n &  0 & \ldots & 0 & 0 \\
                          \end{bmatrix}.
 \]
Then \eqref{help_22} holds since $kp_1=k(qk_1)$ and $kp_n=k(qk_n)$. Using that $\widehat{\mathsf{m}}_{1,n} + \widehat{\mathsf{m}}_{n,1} = k(p_1+p_n)\geq 1$, we also have that $ \#\Delta(\widehat{\mathsf{m}})=\#\Delta(\mathsf{m}) +1\leq n-2<n-1$, whence $\widehat{\mathsf{m}}\in M_n(k_1,\ldots, k_n)$ follows.
Using \eqref{help_20} and \eqref{help_24}, we can finish the proof of \eqref{help_20.5} just as in Case I,
and consequently, \eqref{help_20.5} holds in Case III as well.

It follows from the inequality \eqref{help_20.5} that the right hand side of
\eqref{help_psi_2sample_formula_w} is bigger than $\vartheta_{k_1+\cdots+k_n}(k_1\odot x_1,\ldots,k_n\odot x_n) - 2\vare$.
Since $\vare>0$ was arbitrary, by taking the limit $\vare\downarrow 0$, we get that the right hand side of
 \eqref{help_psi_2sample_formula_w} is greater than or equal to its left hand side.
In view of Step 1, this completes the proof of the equality in \eqref{help_psi_2sample_formula_w}.
\end{proof}

The following result is an immediate consequence of  Theorems \ref{Thm_2_to_n} and \ref{Thm_2_to_n+w}.

\begin{Cor}\label{Rem_determ_class}
The set $S$ defined by
\Eq{*}{
  S:=\{(x_1,\dots,x_n)\in X^n\mid n\in\NN,\, \#\{x_1,\dots,x_n\}\leq2\}
}
is a determining class for $\Psi[T,W_2](X,\Theta)$.
\end{Cor}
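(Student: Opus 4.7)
The plan is to deduce the corollary as an immediate application of Theorem \ref{Thm_2_to_n}. Fix arbitrary $\psi,\varphi\in\Psi[T,W_2](X,\Theta)$ and suppose
\[
 \vartheta_{n,\psi}(\bx)=\vartheta_{n,\varphi}(\bx) \qquad \text{for every } n\in\NN \text{ and every } \bx\in S\cap X^n.
\]
I need to upgrade this to equality for all $n\in\NN$ and all $\bx\in X^n$.

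For $n=1$, any $1$-tuple $(x_1)\in X^1$ satisfies $\#\{x_1\}=1\leq 2$, so $X^1\subseteq S$ and the equality is immediate from the hypothesis. For $n\geq 2$ and an arbitrary $(x_1,\ldots,x_n)\in X^n$, I would apply the first equality of \eqref{help_psi_2sample_formula} simultaneously to $\psi$ and to $\varphi$. The right-hand side is
\[
 \sup_{\mathsf{m}\in\mathsf{M}_n}\Big\{\min_{(i,j)\in\Delta(\mathsf{m})}
   \vartheta_{\mathsf{m}_{i,j}+\mathsf{m}_{j,i}}(\mathsf{m}_{i,j}\odot x_i,\mathsf{m}_{j,i}\odot x_j)\Big\},
\]
and the key observation is that each argument tuple $(\mathsf{m}_{i,j}\odot x_i,\mathsf{m}_{j,i}\odot x_j)$ is built from copies of only the two values $x_i$ and $x_j$; in particular it contains at most two distinct entries, hence lies in $S$. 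By the standing hypothesis each such individual estimator value agrees for $\psi$ and $\varphi$, so the whole sup-min expression yields the same number for both functions. This forces $\vartheta_{n,\psi}(x_1,\ldots,x_n)=\vartheta_{n,\varphi}(x_1,\ldots,x_n)$, as desired.

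No genuine obstacle is expected: the entire combinatorial content lies in Theorem \ref{Thm_2_to_n}, which has already reduced the general $n$-sample estimator to a formula whose inputs are precisely estimator values on two-value tuples, i.e., on elements of $S$. The corollary is therefore a direct translation of that formula into the language of Definition \ref{Def_det_class}. (Theorem \ref{Thm_2_to_n+w} would give the same conclusion via its $k_1=\cdots=k_n=1$ specialization, so it is not needed separately.)
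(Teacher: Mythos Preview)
Your proposal is correct and matches the paper's approach: the paper states the corollary as an immediate consequence of Theorems~\ref{Thm_2_to_n} and~\ref{Thm_2_to_n+w} without further detail, and your argument spells out exactly this deduction from formula~\eqref{help_psi_2sample_formula}. Noting that Theorem~\ref{Thm_2_to_n} alone suffices (with Theorem~\ref{Thm_2_to_n+w} being the more general version behind it) is accurate.
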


\begin{Rem}
If $\psi\in\Psi[C,W_2](\Theta,\Theta)$ with $\psi(t,t)=0$ for all $t\in\Theta$, then $\psi$ becomes a so-called quasi-deviation and the corresponding generalized $\psi$-estimator will be equal to a quasi-deviation mean (see P\'ales \cite{Pal82a}).
Hence, in this case, Theorem \ref{Thm_2_to_n} provides a formula for quasi-deviation means as well.
\proofend
\end{Rem}

Next, we present three applications of Theorem \ref{Thm_2_to_n}.
The first and second are related to the maximum likelihood estimator of some parameter of certain absolutely continuous random variables, and the third is about expectiles.

\begin{Ex}\label{Ex3}
Let $p\in\NN$ be an odd integer.
Let $f:\RR\times\RR\to\RR$,
 \[
   f(x,t):= \frac{1}{2(p+1)^{\frac{1}{p+1}} \Gamma(\frac{p+2}{p+1})}  \exp\bigg(-\frac{(x-t)^{p+1}}{p+1}\bigg),\qquad  x,t\in\RR.
 \]
Note that, for all $t\in\RR$, the function $\RR\ni x \mapsto f(x,t)$ is the density function of a random variable with generalized normal distribution having location parameter $t$, scale parameter $(p+1)^{\frac{1}{p+1}}$ and shape parameter $p+1$.
The location parameter $t$ coincides also with the mean of the distribution.
Generalized normal distributions have been frequently used in image processing and communication theory. Observe that for $p=1$ and $t=0$, the map $\RR\ni x\mapsto f(x,0)$ is the density function of the standard normal distribution.

Suppose that $p$ is known.
Then, as it was explained in Section 4 in Barczy and P\'ales \cite[pages 32 and 33]{BarPal2}, for each $n\in\NN$ and $x_1,\ldots,x_n\in\RR$, the equation
 $\sum_{i=1}^n \psi(x_i,t)=0$, $t\in\RR$, where $\psi:\RR\times\RR\to\RR$,
 \[
   \psi(x,t):=\frac{\partial_2f(x,t)}{f(x,t)} = (x-t)^p, \qquad x,t\in\RR,
 \]
 is nothing else but the likelihood equation based on the observations $x_1,\ldots,x_n$, when we estimate
 the parameter $t\in\RR$ using maximum likelihood estimation.
Then $\psi\in\Psi(\RR,\RR)$, and $\psi$ is a $Z_1$-function with $\vartheta_1(x)=x$, $x\in\RR$.
Further, for all $x,y\in\RR$ with $\vartheta_1(x)=x<y=\vartheta_1(y)$, the function
 \[
 (\vartheta_1(x),\vartheta_1(y)) = (x,y)\ni t \mapsto - \frac{\psi(x,t)}{\psi(y,t)} = - \frac{(x-t)^p}{(y-t)^p} = \left( \frac{y-x}{y-t} - 1 \right)^p
 \]
 is strictly increasing.
This is in accordance with part (i) of Proposition 2 in Barczy and P\'ales \cite{BarPal2}, since, for all $x\in\RR$, the function
 $\RR\ni t \mapsto \psi(x,t)=(x-t)^p$ is strictly decreasing.
Consequently, using part (vi) of Theorem 1 in Barczy and P\'ales \cite{BarPal2}, we have that  $\psi$ is a $W$-function,
(i.e., it is a $W_n$-function for each $n\in\NN$).
Taking into account that $\psi$ is continuous in its second variable, we have $\psi$ is a $Z_n^{\pmb{\lambda}}$-function for each $n\in\NN$ and $\pmb{\lambda}=(\lambda_1,\ldots,\lambda_n)\in\Lambda_n$, i.e., for each $n\in\NN$, $x_1,\ldots,x_n\in \RR$, and $\blambda\in\Lambda_n$, the equation
 \[
  \sum_{i=1}^n \lambda_i \psi(x_i,t) = \sum_{i=1}^n \lambda_i (x_i-t)^p = 0, \qquad t\in\RR,
 \]
has a unique solution, denoted by $\vartheta_n^{\pmb{\lambda}}(x_1,\ldots,x_n)$.
We note that, in general, this equation cannot be solved explicitly in an algebraic manner for $n\geq3$.
On the other hand, the situation is completely different if $n=2$.
Indeed, for $x_1,x_2\in\RR$ and $(\lambda_1,\lambda_2)\in\Lambda_2$, by solving the equation
 \[
   \lambda_1 \psi(x_1,t) + \lambda_2 \psi(x_2,t)=\lambda_1(x_1-t)^p + \lambda_2(x_2-t)^p=0, \qquad t\in\RR,
 \]
 one can get that
 \[
  \vartheta_2^{(\lambda_1,\lambda_2)}(x_1,x_2) = \frac{\sqrt[p]{\lambda_1}x_1 + \sqrt[p]{\lambda_2}x_2}{\sqrt[p]{\lambda_1} + \sqrt[p]{\lambda_2}}.
 \]
This implies that
 \begin{align}\label{help_33}
  \vartheta_{\ell_1+\ell_2}(\ell_1\odot x_1,\ell_2\odot x_2)
   = \vartheta_2^{(\ell_1,\ell_2)}(x_1,x_2) = \frac{\sqrt[p]{\ell_1}\,x_1 + \sqrt[p]{\ell_2}\,x_2}{\sqrt[p]{\ell_1} + \sqrt[p]{\ell_2}}
 \end{align}
 for all $x_1,x_2\in\RR$ and $\ell_1,\ell_2\in\ZZ_+$ with $\ell_1+\ell_2\geq 1$.
By Theorem \ref{Thm_2_to_n}, for all $n\geq 2$, $n\in\NN$ and $x_1,\ldots,x_n\in X$, we get
 \[
  \vartheta_{n}(x_1,\dots,x_n)
         =\sup_{\mathsf{m}\in \mathsf{M}_n}\min_{(i,j)\in\Delta(\mathsf{m})}
          \frac{\sqrt[p]{\mathsf{m}_{i,j}}\,x_i + \sqrt[p]{\mathsf{m}_{j,i}}\,x_j}{\sqrt[p]{\mathsf{m}_{i,j}} + \sqrt[p]{\mathsf{m}_{j,i}}},
 \]
where the matrix $\mathsf{M}_n$ and the set $\Delta(\mathsf{m})$, $\mathsf{m}\in \mathsf{M}_n$, are defined in Theorem \ref{Thm_2_to_n} and in \eqref{help_Delta_m}, respectively.
Recall that the diagonal entries $\mathsf{m}_{i,i}$, $i\in\{1,\ldots,n\}$ of a matrix $\mathsf{m}\in \mathsf{M}_n$
 do not play any role in the formula for $\vartheta_{n}(x_1,\dots,x_n)$, so without loss of generality we can and do assume that $\mathsf{m}_{i,i}=0$, $i\in\{1,\ldots,n\}$.

In the special case $n=3$, for each $\mathsf{m}=(\mathsf{m}_{i,j})_{i,j=1}^3 \in\ZZ_+^{3\times 3}$, we have
 \[
   \Delta(\mathsf{m})=\big\{(i,j)\colon 1\leq i<j \leq 3,\ \mathsf{m}_{i,j}+\mathsf{m}_{j,i} \geq 1 \big\},
 \]
and the inequality $\#\Delta(\mathsf{m})\leq 2$ holds if and only if at least one of the following three equalities is satisfied:
 $(\mathsf{m}_{1,2}, \mathsf{m}_{2,1}) = (0,0)$,
 $(\mathsf{m}_{1,3}, \mathsf{m}_{3,1}) = (0,0)$ or $(\mathsf{m}_{2,3}, \mathsf{m}_{3,2}) = (0,0)$ (equivalently, $\mathsf{m}_{1,2}+\mathsf{m}_{2,1}=0$, $\mathsf{m}_{1,3}+\mathsf{m}_{3,1}=0$ or $\mathsf{m}_{2,3}+\mathsf{m}_{3,2}=0$).
Consequently, we get
\Eq{*}{
  \mathsf{M}_3
    &= \bigcup_{k=1}^\infty
          \left\{
              \begin{bmatrix}
                0 & a & k-a \\
                k-b & 0 & b \\
                k-c & c & 0 \\
              \end{bmatrix}
              : a,b,c\in\{0,1,\ldots,k\},\ (k+a-b)(2k-a-c)(b+c)=0
         \right\}\\
  &= \bigcup_{k=1}^\infty
          \left\{
              \begin{bmatrix}
                0 & 0 & k \\
                0 & 0 & k \\
                k-\ell & \ell & 0 \\
              \end{bmatrix},
              \begin{bmatrix}
                0 & k & 0 \\
                k-\ell & 0 & \ell \\
                0 & k & 0 \\
              \end{bmatrix},
              \begin{bmatrix}
                0 & \ell & k-\ell \\
                k & 0 & 0 \\
                k & 0 & 0 \\
              \end{bmatrix}
              : \ell\in\{0,1,\ldots,k\}
         \right\} \\
  &=\bigcup_{\{u,v\in\ZZ_+:\, u+v\geq1\}}
  \left\{
    \begin{bmatrix}
  0 & u & v \\
  u+v & 0 & 0 \\
  u+v & 0 & 0 \\
  \end{bmatrix},\quad
  \begin{bmatrix}
  0 & u+v & 0 \\
  u & 0 & v \\
  0 & u+v & 0 \\
  \end{bmatrix},\quad
  \begin{bmatrix}
  0 & 0 & u+v \\
  0 & 0 & u+v \\
  u & v & 0 \\
  \end{bmatrix}\right\}.
}
Note that the set $M_3$ is a union of matrices of 3 different types. In principle, for a general $n\geq 2$, $n\in\NN$, the set $M_n$ could be written in a similar form (it would be a union of matrices of $\binom{n(n-1)/2}{n-1}$ different types).
This and Theorem \ref{Thm_2_to_n} yield that, for all $x_1,x_2,x_3\in\RR$, the unique solution of the equation
  \begin{align}\label{help_30}
    (x_1-t)^p + (x_2-t)^p + (x_3-t)^p=0,\qquad t\in\RR,
  \end{align}
can be written as
\begin{align}\nonumber
   &\vartheta_3(x_1,x_2,x_3)&&\\\label{help_29}
   &= \sup_{\{u,v\in\ZZ_+:\, u+v\geq1\}}\max\bigg\{
   &&\hspace{-5mm}\min\Big(\vartheta_{2u+v}(u\odot x_1,(u+v)\odot x_2),\vartheta_{u+2v}(v\odot x_1,(u+v)\odot x_3)\Big), \\
   &&&\hspace{-6mm}\min\Big(\vartheta_{2u+v}((u+v)\odot x_1,u\odot x_2),\vartheta_{u+2v}(v\odot x_2,(u+v)\odot x_3)\Big),\nonumber\\
   &&&\hspace{-6mm}\min\Big(\vartheta_{2u+v}((u+v)\odot x_1,u\odot x_3),\vartheta_{u+2v}((u+v)\odot x_2,v\odot x_3)\Big) \bigg\},\nonumber
 \end{align}
 that is,
 \begin{align}\label{help_31}
  \begin{split}
   \vartheta_3(x_1,x_2,x_3)
    = \sup_{\{u,v\in\ZZ_+:\, u+v\geq1\}}\max\bigg\{
    &\min\bigg(
    \frac{\sqrt[p]{u}\,x_1 + \sqrt[p]{u+v}\,x_2}{\sqrt[p]{u+v} + \sqrt[p]{u}},
    \frac{\sqrt[p]{v}\,x_1 + \sqrt[p]{u+v}\,x_3}{\sqrt[p]{u+v} + \sqrt[p]{v}}\bigg),\\
    &\min\bigg(
    \frac{\sqrt[p]{u+v}\,x_1 + \sqrt[p]{u}\,x_2}{\sqrt[p]{u+v} + \sqrt[p]{u}},
    \frac{\sqrt[p]{v}\,x_2 + \sqrt[p]{u+v}\,x_3}{\sqrt[p]{u+v} + \sqrt[p]{v}}\bigg),\\
    &\min\bigg(
    \frac{\sqrt[p]{u+v}\,x_1 + \sqrt[p]{u}\,x_3}{\sqrt[p]{u+v} + \sqrt[p]{u}},
    \frac{\sqrt[p]{u+v}\,x_2 + \sqrt[p]{v}\,x_3}{\sqrt[p]{u+v} + \sqrt[p]{v}}\bigg)
    \bigg\}.
  \end{split}
 \end{align}
{It is easy to see that, for any continuous function $h:[0,1]\to\RR$, it holds that
 \begin{align}\label{help_36}
   \sup_{\{u,v\in\ZZ_+:\, u+v\geq1\}} h\bigg( \frac{u}{u+v}\bigg) =  \sup_{\lambda\in(0,1)\cap\QQ} h(\lambda)
   =  \sup_{\lambda\in[0,1]} h(\lambda).
 \end{align}
Consequently, using also \eqref{help_31}, we get that
 \begin{align}\label{help_32}
  \begin{split}
   \vartheta_3(x_1,x_2,x_3)
    = \sup_{\lambda\in[0,1]}
    \max\bigg\{
    &\min\bigg(
    \frac{\sqrt[p]{\lambda}\,x_1 + x_2}{1 + \sqrt[p]{\lambda}},
    \frac{\sqrt[p]{1-\lambda}\,x_1 + x_3}{1 + \sqrt[p]{1-\lambda}}\bigg),\\
    &\min\bigg(
    \frac{x_1 + \sqrt[p]{\lambda}\,x_2 }{1 + \sqrt[p]{\lambda}},
    \frac{ \sqrt[p]{1-\lambda}\,x_2 + x_3}{1 + \sqrt[p]{1-\lambda}}\bigg),\\
    &\min\bigg(
    \frac{x_1 + \sqrt[p]{\lambda}\, x_3}{1 + \sqrt[p]{\lambda}},
    \frac{x_2 + \sqrt[p]{1-\lambda}\, x_3}{1 + \sqrt[p]{1-\lambda}}\bigg)
    \bigg\}.
 \end{split}
 \end{align}

Next, as for demonstration, we consider some special choices of $x_i$, $i=1,2,3$.

If $x_1=x_2=x_3=1$, then the equation \eqref{help_30} reduces to $(1-t)^p=0$, $t\in\RR$,
 which has the unique solution $t=1$, i.e., $\vartheta_3(1,1,1)=1$.
This is in accordance with \eqref{help_32}, since, in the considered special case, the right-hand side of \eqref{help_32}
 is equal to $\sup_{\lambda\in[0,1]} \{\min(1,1)\}=1$.}

If $x_1=x_2=1$ and $x_3=0$, then the equation \eqref{help_30} reduces to $2(1-t)^p - t^p=0$, $t\in\RR$, which has the unique solution
 \[
   \vartheta_3(1,1,0) = \frac{\sqrt[p]{2}}{1+\sqrt[p]{2}}.
 \]
Note that $\vartheta_3(1,1,0) = \vartheta_2^{(2,1)}(1,0)$, and hence \eqref{help_33} also yields the above expression.
Next, we calculate $\vartheta_3(1,1,0)$ using \eqref{help_32} as well.
In the considered special case, the right-hand side of \eqref{help_32} is equal to
 \begin{align*}
   &\sup_{\lambda\in[0,1]}\max
    \bigg\{
    \min\Bigg( 1, \frac{\sqrt[p]{1-\lambda}}{1 + \sqrt[p]{1-\lambda}}\bigg),
    \min\bigg( 1, \frac{\sqrt[p]{1-\lambda}}{1 + \sqrt[p]{1-\lambda}}\bigg),
    \min\bigg(
    \frac{1}{1 + \sqrt[p]{\lambda}}, \frac{1}{1 + \sqrt[p]{1-\lambda}}\bigg)
    \bigg\}\\
   &\qquad = \sup_{\lambda\in[0,1]}\max
    \bigg\{ \frac{\sqrt[p]{1-\lambda}}{1 + \sqrt[p]{1-\lambda}},
            \min\bigg(
    \frac{1}{1 + \sqrt[p]{\lambda}}, \frac{1}{1 + \sqrt[p]{1-\lambda}}\bigg)
     \bigg\}.
 \end{align*}
If $\lambda\in[0,\frac{1}{2}]$, then $1/(1 + \sqrt[p]{1-\lambda})\leq 1/(1+\sqrt[p]{\lambda})$, and hence
 \begin{align*}
  &\sup_{\lambda\in[0,\frac{1}{2}]}\max
    \bigg\{ \frac{\sqrt[p]{1-\lambda}}{1 + \sqrt[p]{1-\lambda}},
            \min\bigg(
    \frac{1}{1 + \sqrt[p]{\lambda}}, \frac{1}{1 + \sqrt[p]{1-\lambda}}\bigg)
     \bigg\} \\
 &\qquad =  \sup_{\lambda\in[0,\frac{1}{2}]}\max
    \bigg\{ \frac{\sqrt[p]{1-\lambda}}{1 + \sqrt[p]{1-\lambda}},
             \frac{1}{1 + \sqrt[p]{1-\lambda}}
     \bigg\}
  = \sup_{\lambda\in[0,\frac{1}{2}]}
      \frac{1}{1 + \sqrt[p]{1-\lambda}}
 = \frac{1}{1 + \sqrt[p]{\frac{1}{2}}}
  = \frac{\sqrt[p]{2}}{1+\sqrt[p]{2}}.
 \end{align*}
If $\lambda\in[\frac{1}{2},1]$, then $1/(1 + \sqrt[p]{\lambda})\leq 1/(1+\sqrt[p]{1-\lambda})$, and hence
 \begin{align*}
  &\sup_{\lambda\in[\frac{1}{2},1]}\max
    \bigg\{ \frac{\sqrt[p]{1-\lambda}}{1 + \sqrt[p]{1-\lambda}},
            \min\bigg(
    \frac{1}{1 + \sqrt[p]{\lambda}}, \frac{1}{1 + \sqrt[p]{1-\lambda}}\bigg)
     \bigg\} \\
 &\qquad =  \sup_{\lambda\in[\frac{1}{2},1]}\max
    \bigg\{ \frac{\sqrt[p]{1-\lambda}}{1 + \sqrt[p]{1-\lambda}},\frac{1}{1 + \sqrt[p]{\lambda}}
     \bigg\}
  = \max
    \bigg\{ \sup_{\lambda\in[\frac{1}{2},1]}\frac{\sqrt[p]{1-\lambda}}{1 + \sqrt[p]{1-\lambda}},\sup_{\lambda\in[\frac{1}{2},1]}\frac{1}{1 + \sqrt[p]{\lambda}}
     \bigg\}
 \end{align*}
 \begin{align*}
 &\qquad
  = \max\bigg\{ \frac{1}{1 + \sqrt[p]{2}}, \frac{\sqrt[p]{2}}{1 + \sqrt[p]{2}} \bigg\}
  = \frac{\sqrt[p]{2}}{1+\sqrt[p]{2}}.
 \end{align*}
This yields that
 \begin{align*}
   \sup_{\lambda\in[0,1]}\max
    \bigg\{ \frac{\sqrt[p]{1-\lambda}}{1 + \sqrt[p]{1-\lambda}},\min\bigg(
    \frac{1}{1 + \sqrt[p]{\lambda}}, \frac{1}{1 + \sqrt[p]{1-\lambda}}\bigg)
     \bigg\}
      = \frac{\sqrt[p]{2}}{1+\sqrt[p]{2}},
 \end{align*}
 as desired.

If $x_1:=x$, $x_2:=\frac{x+y}{2}$ and $x_3:=y$, where $x,y\in\RR$  with $x<y$ are fixed, then the equation \eqref{help_30} reduces to
 \begin{align}\label{help_35}
  (x-t)^p + \Big(\frac{x+y}{2} - t \Big)^p + (y-t)^p = 0,\qquad t\in\RR,
 \end{align}
 which has the unique solution
 \[
   \vartheta_3\Big(x,\frac{x+y}{2},y\Big)
   = \frac{x+y}{2}.
 \]
Note that the unique solution of \eqref{help_35} is nothing else but the arithmetic mean of $x$ and $y$
 for any odd integer $p\in\NN$.
Next, we calculate $\vartheta_3\left(x,\frac{x+y}{2},y\right)$ using \eqref{help_32}.
In the considered special case, the right-hand side of \eqref{help_32} is equal to
 \begin{align*}
   \sup_{\lambda\in[0,1]}\max
    \bigg\{
    &\min\bigg(
    \frac{\sqrt[p]{\lambda}\,x + \frac{x+y}{2}}{1 + \sqrt[p]{\lambda}},
    \frac{\sqrt[p]{1-\lambda}\,x + y}{1 + \sqrt[p]{1-\lambda}}\bigg),\\
    &\min\bigg(
    \frac{x + \sqrt[p]{\lambda}\,\frac{x+y}{2} }{1 + \sqrt[p]{\lambda}},
    \frac{ \sqrt[p]{1-\lambda}\,\frac{x+y}{2} + y}{1 + \sqrt[p]{1-\lambda}}\bigg),\\
    &\min\bigg(
    \frac{x + \sqrt[p]{\lambda}\, y}{1 + \sqrt[p]{\lambda}},
    \frac{\frac{x+y}{2} + \sqrt[p]{1-\lambda}\, y}{1 + \sqrt[p]{1-\lambda}}\bigg)
    \bigg\}.
 \end{align*}
One can easily check that the inequality
 \begin{align*}
   \frac{\sqrt[p]{\lambda}\,x + \frac{x+y}{2}}{1 + \sqrt[p]{\lambda}}
   \leq \frac{\sqrt[p]{1-\lambda}\,x + y}{1 + \sqrt[p]{1-\lambda}}
 \end{align*}
 is equivalent to
 \[
   0 \leq \sqrt[p]{\lambda}(y-x) + (1-\sqrt[p]{1-\lambda})\frac{y-x}{2},
 \]
 which is satisfied, since $\lambda\in[0,1]$ and $x<y$.
This yields that
 \[
 \min\Bigg(
    \frac{\sqrt[p]{\lambda}\,x + \frac{x+y}{2}}{1 + \sqrt[p]{\lambda}},
    \frac{\sqrt[p]{1-\lambda}\,x + y}{1 + \sqrt[p]{1-\lambda}}\Bigg)
   = \frac{\sqrt[p]{\lambda}\,x + \frac{x+y}{2}}{1 + \sqrt[p]{\lambda}},
   \qquad \lambda\in[0,1].
 \]
Similarly, one can check that
 \begin{align*}
  \min\bigg(
    \frac{x + \sqrt[p]{\lambda}\,\frac{x+y}{2} }{1 + \sqrt[p]{\lambda}},
    \frac{ \sqrt[p]{1-\lambda}\,\frac{x+y}{2} + y}{1 + \sqrt[p]{1-\lambda}}\bigg)
   =  \frac{x + \sqrt[p]{\lambda}\,\frac{x+y}{2} }{1 + \sqrt[p]{\lambda}},
   \qquad \lambda\in[0,1],
 \end{align*}
 and
 \begin{align*}
 \min\bigg(
    \frac{x + \sqrt[p]{\lambda}\, y}{1 + \sqrt[p]{\lambda}},
    \frac{\frac{x+y}{2} + \sqrt[p]{1-\lambda}\, y}{1 + \sqrt[p]{1-\lambda}}\bigg)
   = \frac{x + \sqrt[p]{\lambda}\, y}{1 + \sqrt[p]{\lambda}},
   \qquad \lambda\in[0,1].
 \end{align*}
Consequently, in the considered special case, the right-hand side of \eqref{help_32} is equal to
 \[
 \sup_{\lambda\in[0,1]}\max
    \bigg\{ \frac{\sqrt[p]{\lambda}\,x + \frac{x+y}{2}}{1 + \sqrt[p]{\lambda}},
             \frac{x + \sqrt[p]{\lambda}\,\frac{x+y}{2} }{1 + \sqrt[p]{\lambda}},
             \frac{x + \sqrt[p]{\lambda}\, y}{1 + \sqrt[p]{\lambda}}  \bigg\}.
 \]
Since
 \[
   \frac{x + \sqrt[p]{\lambda}\,\frac{x+y}{2} }{1 + \sqrt[p]{\lambda}}
           \leq \frac{x + \sqrt[p]{\lambda}\, y}{1 + \sqrt[p]{\lambda}},
           \qquad \lambda\in[0,1],
 \]
 we have that
 \begin{align*}
 \sup_{\lambda\in[0,1]}\max
    \bigg\{ \frac{\sqrt[p]{\lambda}\,x + \frac{x+y}{2}}{1 + \sqrt[p]{\lambda}},
             \frac{x + \sqrt[p]{\lambda}\,\frac{x+y}{2} }{1 + \sqrt[p]{\lambda}},
             \frac{x + \sqrt[p]{\lambda}\, y}{1 + \sqrt[p]{\lambda}}  \bigg\}
   = \sup_{\lambda\in[0,1]}\max
    \bigg\{  \frac{\sqrt[p]{\lambda}\,x + \frac{x+y}{2}}{1 + \sqrt[p]{\lambda}},
             \frac{x + \sqrt[p]{\lambda}\, y}{1 + \sqrt[p]{\lambda}}
    \bigg\}.
 \end{align*}
Note that the inequality
 \[
 \frac{\sqrt[p]{\lambda}\,x + \frac{x+y}{2}}{1 + \sqrt[p]{\lambda}} \leq \frac{x + \sqrt[p]{\lambda}\, y}{1 + \sqrt[p]{\lambda}}
 \]
 is equivalent to $\lambda>\frac{1}{2^p}$.
Consequently,  the right-hand side of \eqref{help_32} is equal to
 \begin{align*}
   \sup_{\lambda\in[0,1]}\max
   & \bigg\{  \frac{\sqrt[p]{\lambda}\,x + \frac{x+y}{2}}{1 + \sqrt[p]{\lambda}},
             \frac{x + \sqrt[p]{\lambda}\, y}{1 + \sqrt[p]{\lambda}}
    \bigg\}
   = \max\bigg\{ \sup_{\lambda\in[0,\frac{1}{2^p}]}  \frac{\sqrt[p]{\lambda}\,x + \frac{x+y}{2}}{1 + \sqrt[p]{\lambda}},
                 \sup_{\lambda\in[\frac{1}{2^p},1]} \frac{x + \sqrt[p]{\lambda}\, y}{1 + \sqrt[p]{\lambda}} \bigg\}\\
  & = \max\bigg\{ \sup_{\mu\in[0,\frac{1}{2}]}  \frac{\mu\,x + \frac{x+y}{2}}{1 + \mu},
                 \sup_{\mu \in[\frac{1}{2},1]} \frac{x + \mu\, y}{1 + \mu} \bigg\}
  = \max\left\{ \frac{x+y}{2}, \frac{x+y}{2} \right\}
    = \frac{x+y}{2},
 \end{align*}
 as desired.
 \proofend
\end{Ex}

\begin{Ex}\label{Ex6}
Let $\alpha,\mu\in\RR_{++}$ and let $\xi$ be a random variable having Lomax (Pareto type II) distribution with parameters $\alpha$ and $\mu$,
 i.e., $\xi$ has a density function
 \[
     f_\xi(x) := \begin{cases}
                  \dfrac{\alpha}{\mu}\left(1 + \dfrac{x}{\mu}\right)^{-(\alpha+1)} & \text{if $x>0$,}\\[2mm]
                  0 & \text{if $x\leq 0$.}
               \end{cases}
 \]
Lomax distribution is frequently used, for example, in actuarial mathematics, and we note that the log-logistic distribution (also known as the Fisk distribution in economics)
is a special case of Lomax distribution, and it is well-applied in survival analysis.
Suppose that $\alpha\in\RR_{++}$ is known.
In Example 6 in Barczy and P\'ales \cite{BarPal3},
 we established the existence and uniqueness of a
 solution of the likelihood equation for $\mu$, and we pointed out the fact that the corresponding function
 \ $\psi:\RR_{++}\times \RR_{++}\to\RR$ takes the form
 \begin{align*}
   \psi(x,\mu) = \frac{\alpha x - \mu}{\mu(\mu + x)}, \qquad x,\mu\in \RR_{++},
 \end{align*}
and it has the properties $[T]$ and $[W]$ (i.e, $[W_n]$ for each $n\in\NN$).
This later fact does not pop up explicitly in Example 6 in Barczy and P\'ales \cite{BarPal3}, but it readily follows, since there we checked that part (vi) of Theorem 1 in Barczy and P\'ales \cite{BarPal2} can be applied.
Further, for each $n\in\NN$ and $x_1,\ldots,x_n\in\RR_{++}$, the likelihood equation for $\mu$ takes the form
 \begin{align*}
   \frac{\alpha}{\alpha+1} = \frac{\mu}{n}\sum_{i=1}^n \frac{1}{x_i+\mu},\qquad \mu\in\RR_{++},
 \end{align*}
 which has a unique solution $\vartheta_{n}(x_1,\ldots,x_n)$.

Next, for $x_1,x_2\in\RR_{++}$ and $(\lambda_1,\lambda_2)\in\Lambda_2$, we solve the equation
 $\lambda_1\psi(x_1,\mu) + \lambda_2\psi(x_2,\mu)=0$, $\mu\in\RR_{++}$, which takes the form
 \[
   \lambda_1\frac{\alpha x_1 - \mu}{\mu(\mu + x_1)} + \lambda_2\frac{\alpha x_2 - \mu}{\mu(\mu + x_2)} = 0,\qquad \mu\in\RR_{++}.
 \]
Multiplying both sides by $\mu(\mu + x_1)(\mu + x_2)$, we have
 \[
   -(\lambda_1+\lambda_2)\mu^2 + \big((\alpha\lambda_1-\lambda_2)x_1 + (\alpha\lambda_2-\lambda_1)x_2 \big)\mu
                               + (\lambda_1+\lambda_2)\alpha x_1x_2=0, \qquad \mu\in\RR_{++}.
 \]
Solving this second-order equation for $\mu$ and taking into account that $\mu\in\RR_{++}$, one can check that
 \begin{align}\label{help_34}
  \begin{split}
  &\vartheta_2^{(\lambda_1,\lambda_2)}(x_1,x_2)
    = \frac{1}{2(\lambda_1+\lambda_2)}
         \Bigg( (\alpha\lambda_1-\lambda_2)x_1 + (\alpha\lambda_2-\lambda_1)x_2 \\
    &\quad  + \!\sqrt{(\alpha\lambda_1-\lambda_2)^2x_1^2 + (\alpha\lambda_2-\lambda_1)^2x_2^2
            + 2\big(\alpha \lambda_1^2 + (\alpha^2 + 4\alpha +1)\lambda_1\lambda_2 + \alpha \lambda_2^2\big)x_1x_2}  \Bigg)
  \end{split}
 \end{align}
 for all $x_1,x_2\in\RR_{++}$ and $(\lambda_1,\lambda_2)\in\Lambda_2$.
Note that $\vartheta_2^{(\lambda_1,\lambda_2)}(cx_1,cx_2) = c \vartheta_2^{(\lambda_1,\lambda_2)}(x_1,x_2)$ for all $c,x_1,x_2\in\RR_{++}$ and $(\lambda_1,\lambda_2)\in\Lambda_2$.
Further, if $\alpha=1$ and $\lambda_1=\lambda_2\in\RR_{++}$,
 then $\vartheta_2^{(\lambda_1,\lambda_2)}(x_1,x_2) = \sqrt{x_1x_2}$ for all $x_1,x_2\in\RR_{++}$, which is nothing else but the geometric mean of $x_1$ and $x_2$.
By \eqref{help_34}, we have an explicit formula for
 \[
  \vartheta_{\ell_1+\ell_2}(\ell_1\odot x_1,\ell_2\odot x_2) = \vartheta_2^{(\ell_1,\ell_2)}(x_1,x_2)
 \]
 for all $x_1,x_2\in\RR_{++}$ and $\ell_1,\ell_2\in\ZZ_+$ with $\ell_1+\ell_2\geq 1$.
Using Theorem \ref{Thm_2_to_n} with $n=3$, for all $x_1,x_2,x_3\in\RR_{++}$, one can approximate
 the unique solution $\vartheta_{3}(x_1,x_2,x_3)$ of the equation
 \begin{align}\label{help_27}
   \frac{\alpha}{\alpha+1} = \frac{\mu}{3}\sum_{i=1}^3 \frac{1}{x_i+\mu},\qquad \mu\in\RR_{++},
 \end{align}
 completely analogously as in Example \ref{Ex3}.

In the special case $\alpha=1$, we have that
 \begin{align*}
     f_\xi(x) = \begin{cases}
                   \frac{1}{\mu\left(1 + \frac{x}{\mu}\right)^2} & \text{if $x>0$,}\\[2mm]
                  0 & \text{if $x\leq 0$,}
               \end{cases}
 \end{align*}
 and, for all $x_1,x_2,x_3\in\RR_{++}$, the equation \eqref{help_27} takes the form
 \[
  \mu\sum_{i=1}^3 \frac{1}{x_i+\mu} = \frac{3}{2},\qquad \mu\in\RR_{++}.
 \]
By algebraic calculations, this equation is equivalent to the equation
 \begin{align*}
   &3\mu^3 + 2(x_1+x_2+x_3)\mu^2 + (x_1x_2 + x_1x_3 + x_2x_3)\mu \\
   &\qquad = \frac{3}{2}\Big( \mu^3 + (x_1+x_2+x_3)\mu^2 + (x_1x_2 + x_1x_3 + x_2x_3)\mu  + x_1x_2x_3 \Big),
       \qquad \mu\in\RR_{++},
 \end{align*}
 which is further equivalent to
 \[
       3\mu^3 + (x_1+x_2+x_3)\mu^2 - (x_1x_2 + x_1x_3 + x_2x_3)\mu - 3x_1x_2x_3 = 0,\qquad \mu\in\RR_{++}.
 \]
In general, this equation can be solved with respect to $\mu$ using the Cardano formula.

In particular, if $x_1:=x$, $x_2:=\sqrt{xy}$, and $x_3:=y$, where $x,y\in\RR_{++}$ with $x<y$,
 then the equation \eqref{help_27} takes the form
 \[
    \frac{1}{3} = \frac{\mu}{3}\left( \frac{1}{x+\mu} + \frac{1}{\sqrt{xy}+\mu} + \frac{1}{y+\mu} \right) , \qquad \mu\in\RR_{++}.
 \]
This equation has only one positive solution, namely, $\mu=\sqrt{xy}$, yielding that
 \[
  \vartheta_3(x,\sqrt{xy},y)=\sqrt{xy}.
 \]
Indeed, one can easily check that $\sqrt{xy}$ is a solution of the equation in question and, as it was explained earlier,
 the equation \eqref{help_27} has a unique solution on $\RR_{++}$.

In what follows, let us choose $x:=1$ and $y:=4$, yielding that $\sqrt{xy}=2$ and $\vartheta_3(1,2,4)=2$.
Next, we calculate $\vartheta_3(1,2,4)$ using Theorem \ref{Thm_2_to_n} as well.
By \eqref{help_34}, we get that
 \begin{align*}
  &\vartheta_2^{(\lambda_1,\lambda_2)}(x_1,x_2)
   = \vartheta_2^{(\lambda_1,\lambda_2)}(1,2)
   = \frac{1}{2(\lambda_1+\lambda_2)}
      \left( \lambda_2 - \lambda_1 + \sqrt{  9\lambda_1^2 + 14\lambda_1\lambda_2 + 9\lambda_2^2} \right),\\
  &\vartheta_2^{(\lambda_1,\lambda_2)}(x_1,x_3)
   = \vartheta_2^{(\lambda_1,\lambda_2)}(1,4)
   = \frac{1}{2(\lambda_1+\lambda_2)}
      \left( 3\lambda_2 - 3\lambda_1 + \sqrt{  25\lambda_1^2 + 14\lambda_1\lambda_2 + 25\lambda_2^2} \right),\\
  &\vartheta_2^{(\lambda_1,\lambda_2)}(x_2,x_3)
   = \vartheta_2^{(\lambda_1,\lambda_2)}(2,4)
   = 2 \vartheta_2^{(\lambda_1,\lambda_2)}(1,2)
   = \frac{1}{\lambda_1+\lambda_2}
      \left( \lambda_2 - \lambda_1 + \sqrt{  9\lambda_1^2 + 14\lambda_1\lambda_2 + 9\lambda_2^2} \right)
 \end{align*}
 for all $(\lambda_1,\lambda_2)\in\Lambda_2$.
Further, recall that
 \[
  \vartheta_{\ell_1+\ell_2}(\ell_1\odot z_1,\ell_2\odot z_2) = \vartheta^{(\ell_1,\ell_2)}_2(z_1,z_2)
 \]
 for all $z_1,z_2\in\RR_{++}$ and $\ell_1,\ell_2\in\ZZ_+$ with $\ell_1+\ell_2\geq 1$.
Hence for all $u,v\in\ZZ_+$ with $u+v\geq 1$, we get that
 \begin{align*}
   \vartheta_{2u+v}(u\odot x_1,(u+v)\odot x_2)
     & = \vartheta^{(u,u+v)}_2(1,2)
      = \frac{1}{2(2u+v)}
         \left(v + \sqrt{  9u^2 + 14 u (u+v) + 9(u+v)^2} \right)\\
     & = \frac{1}{2\left(1+\frac{u}{u+v}\right)}
         \left( 1- \frac{u}{u+v} + \sqrt{  9 \left(\frac{u}{u+v} \right)^2 + 14\cdot\frac{u}{u+v} + 9} \right),
 \end{align*}
  and, analogously,
 \begin{align*}
  & \vartheta_{u+2v}(v\odot x_1,(u+v)\odot x_3)
      = \vartheta^{(v,u+v)}_2(1,4)\\
     & \qquad = \frac{1}{2\left(2-\frac{u}{u+v}\right)}
         \left( 3\cdot\frac{u}{u+v} + \sqrt{  25\left(1-\frac{u}{u+v} \right)^2 + 14\left(1-\frac{u}{u+v}\right) + 25} \right),\\
   &\vartheta_{2u+v}((u+v)\odot x_1,u\odot x_2)
      = \vartheta^{(u+v,u)}_2(1,2)\\
   &\qquad  = \frac{1}{2\left(1+\frac{u}{u+v}\right)}
         \left( -\left(1-\frac{u}{u+v}\right) + \sqrt{  9  + 14\cdot \frac{u}{u+v} + 9\left(\frac{u}{u+v} \right)^2} \right),\\
   &\vartheta_{u+2v}(v\odot x_2,(u+v)\odot x_3)
      = \vartheta^{(v,u+v)}_2(2,4) =2\vartheta^{(v,u+v)}_2(1,2) \\
   &\qquad = \frac{1}{2-\frac{u}{u+v}}
    \left( \frac{u}{u+v} + \sqrt{  9 \left(1-\frac{u}{u+v} \right)^2 + 14\left(1-\frac{u}{u+v}\right) + 9} \right),\\
   &\vartheta_{2u+v}((u+v)\odot x_1,u\odot x_3) = \vartheta^{(u+v,u)}_2(1,4)\\
   &\qquad  = \frac{1}{2\left(1+\frac{u}{u+v}\right)}
         \left( -3\left(1-\frac{u}{u+v}\right)  + \sqrt{  25  + 14\cdot \frac{u}{u+v} + 25\left(\frac{u}{u+v} \right)^2} \right),\\
   &\vartheta_{u+2v}((u+v)\odot x_2,v\odot x_3)
      = \vartheta^{(u+v,v)}_2(2,4)
       = 2\vartheta^{(u+v,v)}_2(1,2) \\
   &\qquad = \frac{1}{2-\frac{u}{u+v}}
         \left( -\frac{u}{u+v}  + \sqrt{  9  + 14\left(1-\frac{u}{u+v}\right) + 9\left(1-\frac{u}{u+v} \right)^2} \right).
 \end{align*}
Consequently, using Theorem \ref{Thm_2_to_n} with $n=3$, \eqref{help_29} in Example \ref{Ex3}
 (of which the derivation we did not use the special form of $\psi$ given in Example \ref{Ex3})
 and \eqref{help_36}, we obtain that
 \begin{align*}
 \vartheta_3(1,2,4) = \sup_{\lambda\in[0,1]}\max\big\{  A(\lambda), B(\lambda), C(\lambda)\big\}
                     = \max\bigg\{  \sup_{\lambda\in[0,1]}A(\lambda), \sup_{\lambda\in[0,1]} B(\lambda), \sup_{\lambda\in[0,1]} C(\lambda)\bigg\},
 \end{align*}
 where
{\footnotesize \begin{align*}
  &A(\lambda)
   := \min\bigg(\frac{1}{2\left(1+\lambda\right)}
            \left( 1- \lambda + \sqrt{  9 \lambda^2 + 14\lambda + 9} \right),
            \frac{1}{2\left(2-\lambda\right)}
              \left( 3\lambda + \sqrt{  25(1-\lambda)^2 + 14(1-\lambda) + 25} \right) \bigg),\\
  &B(\lambda):=\min\bigg(\frac{1}{2\left(1+\lambda\right)}
            \left( -(1- \lambda) + \sqrt{  9 \lambda^2 + 14\lambda + 9} \right),
            \frac{1}{2-\lambda}
              \left( \lambda + \sqrt{  9(1-\lambda)^2 + 14(1-\lambda) + 9} \right) \bigg),\\
  &C(\lambda):=\min\bigg(\frac{1}{2\left(1+\lambda\right)}
            \left( -3(1- \lambda) + \sqrt{  25 \lambda^2 + 14\lambda + 25} \right),
            \frac{1}{2-\lambda}
              \left( -\lambda + \sqrt{  9(1-\lambda)^2 + 14(1-\lambda) + 9} \right) \bigg)
 \end{align*}}
 for all $\lambda\in[0,1]$.
Using the software Wolfram Mathematica 14.1, one can obtain that
 \begin{align*}
   &\sup_{\lambda\in[0,1]}A(\lambda) = 2,\qquad \argmax_{\lambda\in[0,1]}A(\lambda) = \{0\},
 \end{align*}
 \begin{align*}
   &\sup_{\lambda\in[0,1]}B(\lambda) = \sqrt{2},\qquad \argmax_{\lambda\in[0,1]}B(\lambda) = \{1\},\\
   &\sup_{\lambda\in[0,1]}C(\lambda) = 2,\qquad \argmax_{\lambda\in[0,1]}C(\lambda) = \{1\}.
 \end{align*}
This implies that $\vartheta_3(1,2,4) = \max\{2,\sqrt{2},2\}=2$, as desired.
\proofend
\end{Ex}

\begin{Ex}\label{Ex_Expectile}
Let $\alpha\in(0,1)$, $n\in\NN$, and $x_1,\ldots,x_n\in\RR$.
The empirical $\alpha$-expectile based on $x_1,\ldots,x_n$ is defined as any solution of the minimization problem:
 \[
   \min_{t\in\RR} \sum_{i=1}^n \Big(\alpha \bone_{\{x_i\geq t\}} + (1-\alpha) \bone_{\{x_i<t\}} \Big)(x_i-t)^2,
 \]
 see, e.g., Newey and Powell \cite{NewPow}.
Expectiles are also called smoothed versions of quantiles.
Motivated by this minimization problem, we may consider the function $\psi:\RR\times\RR\to\RR$,
 \begin{equation}\label{Exp-theor}
   \psi(x,t):=\begin{cases}
                \alpha(x-t) & \text{if $x>t$,}\\
                0 & \text{if $x=t$,}\\
                (1-\alpha)(x-t) & \text{if $x<t$.}
               \end{cases}
 \end{equation}
As we have seen in Example 7 in Barczy and P\'ales \cite{BarPal2}, $\psi$ is a $W$-function
 (i.e., it is a $W_n$-function for each $n\in\NN$), and $\vartheta_1(x)=x$, $x\in\RR$.
Taking into account that $\psi$ is continuous in its second variable, we also get that $\psi$ is a $Z_n^\blambda$-function
 for each $n\in\NN$ and $\blambda=(\lambda_1,\ldots,\lambda_n)\in\Lambda_n$, i.e.,
 for each $n\in\NN$, $x_1,\ldots, x_n\in\RR$, and $\blambda\in\Lambda_n$,
 the equation $\sum_{i=1}^n \lambda_i \psi(x_i,t) = 0$, $t\in\RR$, has a unique solution,
 denoted by $\vartheta_n^\blambda(x_1,\ldots,x_n)$.
In particular, $\psi\in\Psi[T,W_2](\RR,\RR)$, and hence one can apply Theorem \ref{Thm_2_to_n} to $\psi$.

In what follows, we consider the special case $n=3$.
Using Theorem \ref{Thm_2_to_n} with $n=3$ and  \eqref{help_29} in Example \ref{Ex3}
 (of which the derivation we did not use the special form of $\psi$ given in Example \ref{Ex3}),
 for all $x_1,x_2,x_3\in\RR$, one can approximate the unique solution $\vartheta_3(x_1,x_2,x_3)$ of the equation $\sum_{i=1}^3 \psi(x_i,t)=0$, $t\in\RR$,
 in terms of the quantities
 \begin{align*}
  &\vartheta_{2u+v}( u\odot x_1,(u+v)\odot x_2),
   \qquad \vartheta_{u+2v}(v\odot x_1,(u+v)\odot x_3),
   \qquad \vartheta_{2u+v}((u+v)\odot x_1,u\odot x_2), \\
  & \vartheta_{u+2v}(v\odot x_2,(u+v)\odot x_3),
   \qquad  \vartheta_{2u+v}((u+v)\odot x_1,u\odot x_3),
   \qquad \vartheta_{u+2v}((u+v)\odot x_2,v\odot x_3)
 \end{align*}
 where $u,v\in\ZZ_+$ satisfying $u+v\geq1$.
Next, we derive explicit formulae for the previous quantities.
For all $x_1,x_2\in\RR$ with $x_1<x_2$ and $(\lambda_1,\lambda_2)\in\Lambda_2$, we have that
 \begin{align*}
   \lambda_1\psi(x_1,t) + \lambda_2\psi(x_2,t)
      = \begin{cases}
             -\alpha (\lambda_1+\lambda_2) t + \alpha(\lambda_1x_1+\lambda_2x_2) & \text{if $t<x_1$,}\\
             \lambda_2\alpha(x_2-t)     & \text{if $t=x_1$,}\\
             -(\lambda_1(1-\alpha)+\lambda_2\alpha) t + \lambda_1(1-\alpha)x_1+\lambda_2\alpha x_2 & \text{if $x_1<t<x_2$,}\\
              \lambda_1(1-\alpha)(x_1-t)     & \text{if $t=x_2$,}\\
              -(1-\alpha)(\lambda_1+\lambda_2) t + (1-\alpha)(\lambda_1x_1+\lambda_2x_2) & \text{if $t>x_2$.}
       \end{cases}
 \end{align*}
This shows that, for all $x_1,x_2\in\RR$ with $x_1<x_2$ and $(\lambda_1,\lambda_2)\in\Lambda_2$, the equation
 $\lambda_1\psi(x_1,t)+\lambda_2\psi(x_2,t)=0$, $t\in\RR$, has the unique solution
 \[
    \vartheta_2^{(\lambda_1,\lambda_2)}(x_1,x_2)
         = \frac{\lambda_1(1-\alpha)x_1 + \lambda_2 \alpha x_2}{\lambda_1(1-\alpha) + \lambda_2 \alpha}
                                        \in[x_1,x_2],
 \]
 which is nothing else but a weighted average of $x_1$ and $x_2$.
This implies that
 \begin{align*}
   \vartheta_{\ell_1+\ell_2}(\ell_1\odot x_1,\ell_2\odot x_2) = \vartheta_2^{(\ell_1,\ell_2)}(x_1,x_2)
      = \frac{\ell_1(1-\alpha)x_1 + \ell_2 \alpha x_2}{\ell_1(1-\alpha) + \ell_2 \alpha}
 \end{align*}
 for all $x_1,x_2\in\RR$ with $x_1<x_2$ and $\ell_1,\ell_2\in\ZZ_+$ with $\ell_1+\ell_2\geq 1$.
Finally, we note that
 \[
   \vartheta_2^{(\lambda_1,\lambda_2)}(x,x)
      =  \vartheta_1^{(\lambda_1+\lambda_2)}(x)
      = \vartheta_1(x)=x
 \]
 for all $x\in\RR$ and $(\lambda_1,\lambda_2)\in\Lambda_2$.
\proofend
\end{Ex}

\section{Comparison of generalized $\psi$-estimators using comparative functions}
\label{section_comparative}

This part is motivated by Section 4 in P\'ales \cite{Pal1984}, where the author investigated the problem of
 comparison of means on a real interval using comparative functions (defined below).

\begin{Def}\label{Def_comparative_function}
Let $\Theta$ be a nondegenerate open interval of $\RR$.
A function $C:\Theta\times \Theta\to\RR$ is called a comparative function on $\Theta$
 if it is continuous and if the functions
 \[
  \Theta\ni s\mapsto C(s,t) \qquad \text{and}\qquad \Theta\ni t\mapsto C(s,t)
 \]
 are increasing and decreasing for any fixed $t\in\Theta$ and $s\in\Theta$, respectively.
\end{Def}

For example, $C:\RR\times\RR\to\RR$, $C(s,t):=s-t$, $s,t\in\RR$, is a comparative function on $\RR$,
 and $C:\RR_{++}\times\RR_{++}\to\RR$, $C(s,t):=\frac{s}{t}$, $s,t\in\RR_{++}$, is a comparative function on $\RR_{++}$.

\begin{Thm}\label{Thm_comparative_function}
Let $C:\Theta\times \Theta\to\RR$ be a comparative function on $\Theta$. Let $\psi\in\Psi[T,W_2](X,\Theta)$ and $\varphi\in\Psi[T](X,\Theta)$.
Then the following two assertions are equivalent:
 \begin{itemize}
  \item[(i)] For all $n\in\NN$ and $x_1,\ldots,x_n\in X$,
  \[
     C\big(\vartheta_{n,\psi}(x_1,\ldots,x_n), \vartheta_{n,\varphi}(x_1,\ldots,x_n) \big)\leq 0.
  \]
  \item[(ii)] For all $k,m\in\ZZ_+$ with $k+m\geq 1$ and $x,y\in X$,
  \[
     C\big(\vartheta_{k+m,\psi}(k\odot x,m\odot y), \vartheta_{k+m,\varphi}(k\odot x,m\odot y) \big)\leq 0.
  \]
 \end{itemize}
\end{Thm}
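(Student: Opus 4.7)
My plan is to treat the two directions separately. The implication (i)$\Rightarrow$(ii) is immediate, since every tuple $(k\odot x,m\odot y)$ with $k+m\geq1$ is a sample of length $k+m$, so (ii) is a special case of (i). For the harder direction (ii)$\Rightarrow$(i), I would exploit the asymmetry of the hypotheses: the sup-min representation of Theorem~\ref{Thm_2_to_n} is available for $\vartheta_{n,\psi}$ (since $\psi\in\Psi[T,W_2]$), whereas only the mean-value inequality of Theorem~\ref{Thm_psi_becsles_mean_prop} is needed for $\vartheta_{n,\varphi}$ (since $\varphi\in\Psi[T]$), and then push a pairwise contradiction through the monotonicity of $C$.

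Fix $(x_1,\dots,x_n)\in X^n$; the case $n=1$ is already contained in (ii) with $m=0$, so I assume $n\geq2$. Set $a:=\vartheta_{n,\psi}(x_1,\dots,x_n)$ and $b:=\vartheta_{n,\varphi}(x_1,\dots,x_n)$, and suppose for contradiction that $C(a,b)>0$. Continuity of $C$ provides $\delta>0$ with $C(a-\delta,b)>0$. By the sup-min formula of Theorem~\ref{Thm_2_to_n}, I can choose $\mathsf{m}\in\mathsf{M}_n$ with common row-sum $k\in\NN$ such that
\[
  \vartheta_{\mathsf{m}_{i,j}+\mathsf{m}_{j,i},\psi}(\mathsf{m}_{i,j}\odot x_i,\mathsf{m}_{j,i}\odot x_j)>a-\delta
  \qquad\text{for every }(i,j)\in\Delta(\mathsf{m}).
\]

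As observed in Step~1 of the proof of Theorem~\ref{Thm_2_to_n+w}, the concatenation of the two-value tuples $(\mathsf{m}_{i,j}\odot x_i,\mathsf{m}_{j,i}\odot x_j)$ over $(i,j)\in\Delta(\mathsf{m})$ is a permutation of $(k\odot x_1,\dots,k\odot x_n)$. Since $\sum_{i=1}^n\varphi(x_i,t)$ and $\sum_{i=1}^n k\varphi(x_i,t)$ have the same sign at every $t\in\Theta$, one has $\vartheta_{nk,\varphi}(k\odot x_1,\dots,k\odot x_n)=\vartheta_{n,\varphi}(x_1,\dots,x_n)=b$, and applying Theorem~\ref{Thm_psi_becsles_mean_prop} to $\varphi$ then gives
\[
  b \ \geq \ \min_{(i,j)\in\Delta(\mathsf{m})}
   \vartheta_{\mathsf{m}_{i,j}+\mathsf{m}_{j,i},\varphi}(\mathsf{m}_{i,j}\odot x_i,\mathsf{m}_{j,i}\odot x_j).
\]
Let $(i^*,j^*)\in\Delta(\mathsf{m})$ attain this minimum, and write $\alpha^*,\beta^*$ for the $\psi$- and $\varphi$-estimator on the two-value sample $(\mathsf{m}_{i^*,j^*}\odot x_{i^*},\mathsf{m}_{j^*,i^*}\odot x_{j^*})$. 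Then $\alpha^*>a-\delta$ and $\beta^*\leq b$, and the monotonicity of $C$ in its two arguments yields
\[
  C(\alpha^*,\beta^*) \ \geq \ C(a-\delta,\beta^*) \ \geq \ C(a-\delta,b) \ > \ 0,
\]
which contradicts assumption (ii) for this two-value sample, so $C(a,b)\leq0$.

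The main delicate point is recognising that only the sup-min half of Theorem~\ref{Thm_2_to_n} is needed on the $\psi$-side, which is exactly what permits the weakening of the hypothesis on $\varphi$ to property $[T]$ (a symmetric argument using the inf-max half would instead allow one to interchange the roles of $\psi$ and $\varphi$). Getting the inequalities between $\alpha^*$, $\beta^*$ and $a-\delta$, $b$ to line up with the correct direction of monotonicity of $C$ is really the only place where care is required.
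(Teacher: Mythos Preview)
Your proof is correct and follows essentially the same route as the paper's: assume a counterexample, use continuity of $C$ to get slack $\delta$, invoke the sup--min representation (Theorem~\ref{Thm_2_to_n}) on the $\psi$-side to find a good matrix $\mathsf{m}$, use only Theorem~\ref{Thm_psi_becsles_mean_prop} on the $\varphi$-side, select the index pair realising the $\varphi$-minimum, and chain the monotonicity inequalities of $C$ to a contradiction. The only cosmetic omission is that you should note $a-\delta\in\Theta$ (guaranteed since $\Theta$ is open), so that $C(a-\delta,b)$ is defined; the paper makes this explicit.
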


\begin{proof}
The implication $(i)\Longrightarrow(ii)$ is obvious by taking $n=k+m$ and $(x_1,\ldots,x_n)=(k\odot x,m\odot y)$.

Assume now that the assertion (ii) holds.
On the contrary, assume that (i) is not valid.
Then there exist $n\in\NN$ and $x_1,\dots,x_n\in X$ such that
\[
  C\big(\vartheta_{n,\psi}(x_1,\ldots,x_n), \vartheta_{n,\varphi}(x_1,\ldots,x_n) \big)>0.
\]
In view of the continuity of $C$ and the openness of $\Theta$, there  exists an $\varepsilon>0$ such that $\vartheta_{n,\psi}(x_1,\ldots,x_n)-\varepsilon\in\Theta$ and
\[
  C\big(\vartheta_{n,\psi}(x_1,\ldots,x_n)-\varepsilon, \vartheta_{n,\varphi}(x_1,\ldots,x_n) \big)>0.
\]
Using Theorem~\ref{Thm_2_to_n}, we can find a matrix $\mathsf{m}\in\mathsf{M}_n$ such that
\begin{align}\label{help_25}
   \vartheta_{n,\psi}(x_1,\dots,x_n)-\varepsilon
   <\min_{(i,j)\in\Delta(\mathsf{m})}
    \vartheta_{\mathsf{m}_{i,j}+\mathsf{m}_{j,i},\psi}(\mathsf{m}_{i,j}\odot x_i, \mathsf{m}_{j,i}\odot x_j).
\end{align}
At the same time, by Step 1 of the proof of Theorem~\ref{Thm_2_to_n+w}, we also have that
\begin{align*}
   \vartheta_{n,\varphi}(x_1,\dots,x_n)
   \geq\min_{(i,j)\in\Delta(\mathsf{m})}
    \vartheta_{\mathsf{m}_{i,j}+\mathsf{m}_{j,i},\varphi}(\mathsf{m}_{i,j}\odot x_i, \mathsf{m}_{j,i}\odot x_j).
\end{align*}
Here we can indeed refer to Step 1 of the proof of Theorem~\ref{Thm_2_to_n+w} even if we only assumed that $\varphi\in\Psi[T](X,\Theta)$
 instead of $\varphi\in\Psi[T,W_2](X,\Theta)$, since for deriving the inequality \eqref{help_18},
  we only used Theorem~\ref{Thm_psi_becsles_mean_prop}, which holds under the weaker assumption in question.
Therefore, there exists $(i,j)\in\Delta(\mathsf{m})$ such that
\begin{align*}
   \vartheta_{n,\varphi}(x_1,\dots,x_n)
   \geq\vartheta_{\mathsf{m}_{i,j}+\mathsf{m}_{j,i},\varphi}(\mathsf{m}_{i,j}\odot x_i, \mathsf{m}_{j,i}\odot x_j).
\end{align*}
On the other hand, in view of \eqref{help_25}, we also have that
\begin{align*}
   \vartheta_{n,\psi}(x_1,\dots,x_n)-\varepsilon
   <\vartheta_{\mathsf{m}_{i,j}+\mathsf{m}_{j,i},\psi}(\mathsf{m}_{i,j}\odot x_i, \mathsf{m}_{j,i}\odot x_j),
\end{align*}
Combining the last two inequalities with the monotonicity properties of $C$, it follows that
\Eq{*}{
   0&<C\big(\vartheta_{n,\psi}(x_1,\ldots,x_n)-\varepsilon, \vartheta_{n,\varphi}(x_1,\ldots,x_n) \big)\\
   &\leq C\big(\vartheta_{\mathsf{m}_{i,j}+\mathsf{m}_{j,i},\psi}(\mathsf{m}_{i,j}\odot x_i, \mathsf{m}_{j,i}\odot x_j), \vartheta_{\mathsf{m}_{i,j}+\mathsf{m}_{j,i},\varphi}(\mathsf{m}_{i,j}\odot x_i, \mathsf{m}_{j,i}\odot x_j) \big)
   \leq 0,
}
where the last inequality is a consequence of the assertion (ii).
The contradiction so obtained proves that the assertion (i) has to be valid.
\end{proof}

\begin{Rem}

(i) Note that Theorem \ref{Thm_comparative_function} yields the equivalence between the parts (i) and (ii) of Theorem 1
 in Barczy and P\'ales \cite{BarPal3} provided that $\psi \in\Psi[T,W_2](X,\Theta)$ and $\varphi \in\Psi[T](X,\Theta)$
 (one can choose $C(s,t):=s-t$, $s,t\in\Theta$).
We call the attention to the fact that in Theorem 1 in Barczy and P\'ales \cite{BarPal3},
 we supposed that $\psi \in\Psi[T,Z_1](X,\Theta)$ and $\varphi \in\Psi[Z](X,\Theta)$.

(ii) The proof of Theorem \ref{Thm_comparative_function} shows that the continuity of $C$ was not fully used, we only needed that $C$ is continuous in its first variable.
We also point out the fact that our proof technique is completely different from that of the corresponding result for means on pages 68-70 in P\'ales \cite{Pal1984}.
\proofend
\end{Rem}

\begin{Cor}\label{Cor_comparative_function}
Let $C:\Theta\times \Theta\to\RR$ be a comparative function on a non-degenerate open interval $\Theta$.
Let $\psi\in\Psi[T,W_2](X,\Theta)$ and $\varphi\in\Psi[T](X,\Theta)$.
Then
 \begin{align}\label{help_ineq_comparative}
  \begin{split}
  & \sup_{n\in\NN,\, x_1,\ldots,x_n\in\Theta} C\big(\vartheta_{n,\psi}(x_1,\ldots,x_n), \vartheta_{n,\varphi}(x_1,\ldots,x_n) \big) \\
  & \qquad  = \sup_{x,y\in X, \; k,m\in\ZZ_+\; {with}\; k+m\geq 1}
          C\big(\vartheta_{k+m,\psi}(k\odot x,m\odot y), \vartheta_{k+m,\varphi}(k\odot x,m\odot y) \big).
  \end{split}
 \end{align}
\end{Cor}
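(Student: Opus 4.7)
The plan is to derive the corollary directly from Theorem \ref{Thm_comparative_function} by a translation trick applied to $C$. Denote the left-hand and right-hand suprema in \eqref{help_ineq_comparative} by $L$ and $R$, respectively. The inequality $L \geq R$ is immediate: with the choice $n := k+m$ and $(x_1, \ldots, x_n) := (k \odot x, m \odot y)$, the RHS is simply the supremum over a subfamily of the configurations contributing to the LHS.

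For the reverse inequality $L \leq R$, the case $R = +\infty$ is trivial. Assuming $R \in \RR$, I would introduce the translated function $\tilde{C} : \Theta \times \Theta \to \RR$ defined by $\tilde{C}(s, t) := C(s, t) - R$. Clearly $\tilde{C}$ inherits continuity and the required monotonicity from $C$, so by Definition \ref{Def_comparative_function} it is again a comparative function on $\Theta$. By the very definition of $R$ as a supremum, for every $x, y \in X$ and $k, m \in \ZZ_+$ with $k + m \geq 1$ we have
\[
  \tilde{C}\big(\vartheta_{k+m,\psi}(k \odot x, m \odot y), \vartheta_{k+m,\varphi}(k \odot x, m \odot y)\big) \leq 0,
\]
which is precisely assertion (ii) of Theorem \ref{Thm_comparative_function} applied to the pair $(\psi, \varphi)$ with respect to the comparative function $\tilde{C}$.

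Since $\psi \in \Psi[T, W_2](X, \Theta)$ and $\varphi \in \Psi[T](X, \Theta)$, the hypotheses of Theorem \ref{Thm_comparative_function} are satisfied, and the implication $(ii) \Rightarrow (i)$ yields
\[
  \tilde{C}\big(\vartheta_{n,\psi}(x_1, \ldots, x_n), \vartheta_{n,\varphi}(x_1, \ldots, x_n)\big) \leq 0
\]
for all $n \in \NN$ and $x_1, \ldots, x_n \in X$. Rewriting this in terms of $C$ gives $C(\vartheta_{n,\psi}(x_1, \ldots, x_n), \vartheta_{n,\varphi}(x_1, \ldots, x_n)) \leq R$ for every such configuration, and taking the supremum over $n$ and $x_1,\ldots,x_n$ yields $L \leq R$. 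No substantive obstacle is anticipated; the only delicate point is the elementary observation that translating a comparative function by a real constant preserves continuity together with the two monotonicity properties, so that the shifted $\tilde{C}$ is again a comparative function, which is exactly what enables the reduction to Theorem \ref{Thm_comparative_function}.
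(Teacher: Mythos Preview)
Your proof is correct and is essentially identical to the paper's own argument: both note that $L\geq R$ is trivial, dispose of the case $R=+\infty$, and then apply the implication $(ii)\Rightarrow(i)$ of Theorem~\ref{Thm_comparative_function} to the shifted comparative function $C-R$ (the paper writes $C-c^*$) to conclude $L\leq R$.
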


\begin{proof}
It is obvious that the left hand side of \eqref{help_ineq_comparative} is greater than or equal to its right hand side. To verify the reversed inequality,
let $c^*$ denote the right hand side of \eqref{help_ineq_comparative}.
If $c^*=\infty$, then there is nothing to prove. Thus, we may assume that $c^*$ is finite.
Applying Theorem \ref{Thm_comparative_function} (the implication $(ii)\Rightarrow(i)$) with the comparative function $C-c^*$,
 we get that the left hand side of \eqref{help_ineq_comparative} is less than or equal to $c^*$.
This establishes the equality in \eqref{help_ineq_comparative}.
\end{proof}

In the next example, we give an application of Corollary \ref{Cor_comparative_function}, we derive Schweitzer's inequality (also called Kantorovich's inequality),
 see, e.g., Bullen et al.\ \cite[formula (14), page 202]{Bul03}.

\begin{Ex}
Let $\Theta:=\RR_{++}$, $X:=[a,b]$, where $0<a<b$, and $C:\RR_{++}\times\RR_{++}\to\RR$, $C(s,t):=\frac{s}{t}$, $s,t\in\RR_{++}$.
Then $C$ is a comparative function on $\RR_{++}$.
Let $\psi:[a,b]\times\RR_{++}\to\RR$, $\psi(x,t):=x-t$, $x\in[a,b]$, $t\in\RR_{++}$, and $\varphi:[a,b]\times\RR_{++}\to\RR$,
 $\varphi(x,t):=\frac{1}{t} - \frac{1}{x}$, $x\in[a,b]$, $t\in\RR_{++}$.
Then $\psi,\varphi\in\Psi[T,W_2]([a,b],\RR_{++})$, and, for each $n\in\NN$ and $x_1,\ldots,x_n\in[a,b]$, we have that
 \[
   \vartheta_{n,\psi}(x_1,\ldots,x_n)=\frac{x_1+\cdots +x_n}{n} \qquad \text{and}
      \qquad  \vartheta_{n,\varphi}(x_1,\ldots,x_n)=\frac{n}{\frac{1}{x_1}+\cdots + \frac{1}{x_n}},
 \]
i.e., $\vartheta_{n,\psi}(x_1,\ldots,x_n)$ and $\vartheta_{n,\varphi}(x_1,\ldots,x_n)$ are the arithmetic and harmonic means of $x_1,\ldots,x_n$, respectively.
Further, for each $x_1,x_2\in[a,b]$ and $\blambda=(\lambda_1,\lambda_2)\in\Lambda_2$, we have that
 \[
   \vartheta_{2,\psi}^\blambda(x_1,x_2)=\frac{\lambda_1}{\lambda_1+\lambda_2}x_1 + \frac{\lambda_2}{\lambda_1+\lambda_2}x_2 \qquad \text{and}
      \qquad  \vartheta_{n,\varphi}^\blambda(x_1,x_2)=\frac{1}{ \frac{\lambda_1}{\lambda_1+\lambda_2} \cdot \frac{1}{x_1} + \frac{\lambda_2}{\lambda_1+\lambda_2} \cdot  \frac{1}{x_2}}.
 \]
Next, we derive Schweitzer's inequality (see, e.g., formula (14) on page 202 in Bullen et al.\ \cite{Bul03}), which states that
 \[
  \frac1{n^2}(x_1+\cdots+x_n)\left(\frac{1}{x_1}+\cdots + \frac{1}{x_n} \right)
         \leq \frac{(a+b)^2}{4ab}, \qquad
         n\in\NN,\quad \,x_1,\dots,x_n\in[a,b].
 \]
By Corollary \ref{Cor_comparative_function}, we have
 \begin{align*}
    & \sup_{n\in\NN,\,x_1,\dots,x_n\in[a,b]}  \,\,\frac1{n^2}(x_1+\cdots+x_n)\left(\frac{1}{x_1}+\cdots + \frac{1}{x_n} \right)
     = \sup_{n\in\NN,\,x_1,\dots,x_n\in[a,b]} \frac{\vartheta_{n,\psi}(x_1,\ldots,x_n)}{\vartheta_{n,\varphi}(x_1,\ldots,x_n)} \\
    &\qquad  = \sup_{x,y\in[a,b],\; k,m\in\ZZ_+ k,m\in\ZZ_+ \; {with}\; k+m\geq 1 }
            \frac{\vartheta_{2,\psi}(k\odot x, m\odot y)}{\vartheta_{2,\varphi}(k\odot x, m\odot y)}\\
    &\qquad  = \sup_{x,y\in[a,b],\; k,m\in\ZZ_+ k,m\in\ZZ_+ \; {with}\; k+m\geq 1 }
             \left( \frac{k}{k+m} x + \frac{m}{k+m} y \right) \left(  \frac{k}{k+m} \cdot \frac{1}{x} + \frac{m}{k+m}\cdot \frac{1}{y}\right)\\
    &\qquad = \sup_{x,y\in[a,b],\; t\in[0,1]} (tx+(1-t)y)\left(t\cdot\frac{1}{x} + (1-t)\cdot\frac{1}{y}\right)
 \end{align*}
  \begin{align*}
    &\qquad = \sup_{x,y\in[a,b],\; t\in[0,1]} \left( t^2 + t(1-t)\left( \frac{x}{y} + \frac{y}{x} \right) + (1-t)^2\right)\\
    &\qquad = \sup_{t\in[0,1]} \left( t^2 + t(1-t) \sup_{x,y\in[a,b]}\left( \frac{x}{y} + \frac{y}{x} \right) + (1-t)^2\right).
 \end{align*}
Since the function $[\frac{a}{b},\frac{b}{a}]\ni u\mapsto u+\frac{1}{u}$ is monotone decreasing on $[\frac{a}{b},1]$ and monotone increasing
 on $[1,\frac{b}{a}]$, we get that
  \[
    \sup_{x,y\in[a,b]} \left(\frac{x}{y} + \frac{y}{x} \right)
        = \sup_{u\in[\frac{a}{b},\frac{b}{a}]} \left(u + \frac{1}{u} \right)
        = \frac{a}{b}+\frac{b}{a}.
  \]
Hence
 \[
   \sup_{n\in\NN,\,x_1,\dots,x_n\in[a,b]}  \,\,\frac1{n^2}(x_1+\cdots+x_n)\left(\frac{1}{x_1}+\cdots + \frac{1}{x_n} \right)
     = \sup_{t\in[0,1]} \left( t^2 + t(1-t)\left( \frac{a}{b} + \frac{b}{a} \right) + (1-t)^2\right).
 \]
Let $f:[0,1]\to\RR$, $f(t):= t^2 + t(1-t)\left( \frac{a}{b} + \frac{b}{a} \right) + (1-t)^2$, $t\in[0,1]$.
Then
 \[
   f'(t) = 2t+(1-2t)\left( \frac{a}{b} + \frac{b}{a} \right)-2(1-t)=0, \qquad t\in(0,1)
 \]
 holds if and only if $(2t-1)\left( \frac{a}{b} + \frac{b}{a} - 2\right)=0$, where $\frac{a}{b} + \frac{b}{a} - 2>0$,
 and hence we have that the previous equality can hold only with $t=\frac{1}{2}$.
Since $f''(t) = 4-2\big(\frac{a}{b} + \frac{b}{a} \big)<0$, $t\in(0,1)$, $f(0)= f(1) =1$, and
 \[
   f\left(\frac{1}{2}\right)
    = \frac{1}{4} + \frac{1}{4}\left( \frac{a}{b} + \frac{b}{a} \right) + \frac{1}{4}
     =  \frac{(a+b)^2}{4ab}>1,
 \]
 we obtain that
 \begin{align*}
  \sup_{n\in\NN,\,x_1,\dots,x_n\in[a,b]}  \,\,\frac1{n^2}(x_1+\cdots+x_n)\left(\frac{1}{x_1}+\cdots + \frac{1}{x_n} \right)
  = f\left(\frac{1}{2}\right) = \frac{(a+b)^2}{4ab},
 \end{align*}
as desired. This equality also shows that the constant on the right hand side of Schweitzer's inequality is the smallest possible.
\proofend
\end{Ex}

\section*{Acknowledgements}

We acknowledge the valuable suggestions from the referees.

\section*{Declaration of competing interest}

The authors declare that they have no known competing financial interests or personal relationships
that could have appeared to influence the work reported in this paper.

\bibliographystyle{plain}

\end{document}